\documentclass[a4paper,12pt]{amsart}

\usepackage{amssymb,amsmath,amsthm,mathrsfs,enumerate,graphicx, color}
\usepackage[pdfpagelabels,colorlinks,linkcolor=blue,citecolor=blue,urlcolor=blue]{hyperref}
\usepackage{esint}
\usepackage{tikz}
\usetikzlibrary{arrows}
\usepackage{mathtools}
\usepackage{bm}
\usepackage{cite}
\allowdisplaybreaks
\mathtoolsset{showonlyrefs}

\newtheorem{thm}{Theorem}[section]

\newtheorem{lem}[thm]{Lemma}
\newtheorem{prop}[thm]{Proposition}
\newtheorem{defn}[thm]{Definition}
\newtheorem{rem}[thm]{Remark}

\newcommand{\lesi}{\lesssim}

\newcommand{\supp}{\operatorname{supp}}
\newcommand{\f}{\frac}

\newcommand{\vc}{\infty}

\newcommand{\N}{\mathfrak{N}}

\newcommand{\RN}{{\mathbb R^N}}

\newcommand{\F}{{\mathcal F}}
\newcommand{\x}{{\rm x}}
\newcommand{\y}{{\rm y}}
\newcommand{\z}{{\rm z}}

\title[Fractional Leibniz rules for the Dunkl Laplacian ...]{Fractional Leibniz rules for the Dunkl Laplacian in Besov and Triebel--Lizorkin spaces}

\author[THE ANH BUI]{The Anh Bui}
\address{School of Mathematical and Physical Sciences, Macquarie University, NSW 2109,
	Australia}
\email{the.bui@mq.edu.au}

\author[XUETING HAN]{Xueting Han}
\address{Department of Mathematics, Hefei Institute of Technology, Hefei 238706, Anhui, China}
\email{hanxueting12@163.com}

\author[SUMAN MUKHERJEE]{Suman Mukherjee$^\dagger$}
\address{Department of Mathematics, Indian Institute of Technology Bombay, Powai, Mumbai--400076, India.}
\email{sumanmukherjee822@gmail.com}

\subjclass[2020]{Primary  42B35, 42B25; Secondary 26A33, 44A15}
\keywords{Dunkl Laplacian, Fractional Leibniz rule, Besov space, Triebel-Lizorkin space}
\thanks{$\dagger$ \tt{Corresponding author}}

\begin{document}
	
	\begin{abstract}
Let $L$ be the Dunkl Laplacian on the Euclidean space $\mathbb{R}^N$ associated with a normalized root system $R$ and a multiplicity function $k(\nu)\geq 0$, $\nu\in R$. We establish a Leibniz-type rule for the fractional powers of $L$ on Besov and Triebel--Lizorkin spaces in the Dunkl setting. Our approach exploits the interplay between spectral multipliers and the Dunkl transform, together with the support properties of the distributions associated with Dunkl translations. These results extend the corresponding Leibniz-type estimates previously established on $L^p$ spaces to the broader setting of Besov and Triebel--Lizorkin spaces.

	\end{abstract}
	\date{}

	\maketitle
	
	%\tableofcontents
	
	\section{Introduction and main results}\label{sec: intro}

\subsection{Background}

At the core of many nonlinear partial differential equations, such as the Navier--Stokes, Euler, and Korteweg--de Vries equations, lies the fundamental problem of understanding the interaction between fractional differentiation and products of functions (see, for example, \cite{KP, CW, KPV, GK}). This phenomenon is governed by the fractional Leibniz rule, commonly referred to as the Kato--Ponce inequality. Introduced by Kato and Ponce \cite{KP}, this inequality extends the classical Leibniz rule to the setting of fractional derivatives. More precisely, for $s>0$ and Schwartz functions $f$ and $g$, one has
\[
||I^{s}(fg)||_{L^{p}} \lesssim ||I^{s}f||_{L^{p_{1}}}||g||_{L^{p_{2}}} + ||f||_{L^{p_{1}}}||I^{s}g||_{L^{p_{2}}},
\]
where $1/p = 1/p_1 + 1/p_2$, and $I^s = D^s$ or $J^s$. Here, $D^s$ and $J^s$ denote the Riesz and Bessel potentials, respectively, defined in terms of the Fourier transform by
\[
\widehat{(D^s f)}(\xi) = \|\xi\|^s \widehat f(\xi)
\]
and
\[
\widehat{(J^s f)}(\xi) = (1+\|\xi\|^2)^{s/2} \widehat f(\xi).
\]
These inequalities have become indispensable tools in harmonic analysis and the study of nonlinear PDEs, particularly in the analysis of pointwise products and bilinear pseudodifferential operators across a broad range of function spaces. In recent years, considerable attention has been devoted to extending such estimates beyond the Lebesgue spaces, particularly to nonclassical parameter regimes and subtle endpoint cases; see, for example, \cite{GO,BL}. Furthermore, related boundedness results have been extensively investigated in Sobolev, Besov, Triebel--Lizorkin, and other closely related function spaces \cite{BN,NT, HTW}.

The classical approach to establishing fractional Leibniz rules relies fundamentally on Fourier analytic techniques. As a consequence, these methods are inherently tied to the Euclidean setting, significantly limiting their applicability in more general geometric contexts. However, many problems arising in modern harmonic analysis and mathematical physics naturally occur in non-Euclidean frameworks, such as Lie groups or settings involving differential operators with nontrivial potentials. In these situations, classical Fourier methods are no longer directly applicable, making the extension of fractional Leibniz rules beyond the Euclidean setting a central problem in the field. To overcome these limitations, several efforts have been made to extend fractional Leibniz rules to broader non-Euclidean contexts. In particular:
\begin{itemize}
    \item \textbf{Lie groups:} Fractional Leibniz inequalities have been extensively studied in the context of nilpotent and stratified Lie groups, with corresponding results established for Lebesgue, Sobolev, Besov, and Triebel--Lizorkin spaces; we refer to \cite{BBR, Br, CRTN} for further details.
    
  \item \textbf{Specialized operators:} Analogous fractional Leibniz-type estimates have likewise been obtained for several important operators, including the Hermite and Grushin operators; see \cite{Br, NL} and the references therein.
    
  \item \textbf{Spaces of homogeneous type:} Fractional Leibniz-type inequalities have also been investigated on spaces of homogeneous type, particularly within Hardy space settings; related developments can be found in \cite{LZ}.
\end{itemize}
Despite these developments, the available results remain limited in scope. In most cases, the corresponding estimates are valid only under restrictive assumptions on the smoothness, integrability, and summability parameters. A notable recent development in this direction was presented in \cite{B2}, where the author established a broad and unified framework for fractional Leibniz rules far beyond the classical Euclidean setting to the general framework of spaces of homogeneous type. Departing entirely from Fourier-analytic techniques, the author introduced a novel and highly flexible approach based on bilinear spectral multiplier theory in conjunction with heat semigroup methods, thereby providing the first systematic and unified treatment of such inequalities in non-Euclidean environments. More precisely, the work establishes robust bilinear estimates for spectral multipliers associated with nonnegative self-adjoint operators acting on weighted Hardy, Besov, and Triebel--Lizorkin spaces over spaces of homogeneous type without imposing any restrictions on the underlying regularity or integrability parameters. Owing to the generality of the underlying framework, the results apply not only to the classical Euclidean setting but also to a broad range of settings, including nilpotent Lie groups, Hermite expansions, and Grushin-type operators. In addition, these developments furnish powerful tools for the analysis of nonlinear PDEs associated with the previously mentioned operators, particularly in connection with regularity theory and scattering phenomena, thereby considerably enlarging the scope and applicability of fractional Leibniz-type estimates.

\subsection{The Dunkl setup}

We now turn our attention to a different setting. In recent years, analytic Dunkl theory has emerged as a powerful extension of classical Euclidean harmonic analysis and has become an active area of research. The theory originated in the work of Dunkl \cite{D2} and has since undergone significant development (see, for example, \cite{DJ,JL,R,R2,R3,R4, TX1, TX2, DH1, DH2, B, MP}). We begin by presenting some preliminary notations and results in Dunkl theory that will be used throughout the paper. Consider the Euclidean space $\mathbb R^N$ equipped with its standard inner product and corresponding norm. For  {the vector} $0\neq \nu \in \RN$, the reflection $\sigma_\nu$ across the hyperplane orthogonal to $\nu$, denoted $\nu^\perp$, is given explicitly by
\begin{equation}
	\label{eq-reflection}
	\sigma_\nu \x = \x - 2 \frac{\langle \x, \nu \rangle}{\|\nu\|^2} \nu.
\end{equation}

A finite set $R \subset \RN \setminus {0}$ is called a root system if $R \cap \mathbb{R}\nu = \{\pm \nu\}$ for every $\nu \in R$, and $R$ is invariant under all reflections $\sigma_\nu$, that is, $\sigma_\nu(R)=R$ for each $\nu \in R$. In this work, we focus on normalized reduced root systems, meaning that $\|\nu\|^2 = 2$ for every $\nu \in R$. The finite group $G$ generated by the reflections  {$\{\sigma_\nu: \nu \in R\}$} is known as the Weyl group (or reflection group) corresponding to the root system $R$. A multiplicity function is a $G$-invariant map $k: R \to \mathbb{C}$, which will remain fixed and nonnegative throughout this paper. For a point $\x \in \RN$, we denote its $G$-orbit by 
\[
\mathcal O(\x) = \{\sigma(\x) : \sigma \in G\}, 
\]
and for a set $E \subset \RN$, its $G$-orbit is 
\[
\mathcal O(E) = \bigcup_{\x \in E} \mathcal O(\x).
\]
We also consider the $G$-invariant measure
\[
dw(\x) = \prod_{\nu \in R} |\langle \x,\nu\rangle|^{k(\nu)} \, d\x.
\]
The quantity 
\[
\mathfrak{N} := N + \sum_{\nu \in R} k(\nu)
\] 
is referred to as the homogeneous dimension of the system, as it satisfies
\begin{equation}
	\label{eq-homgoenous measure}
	w(B(t\x, tr)) = t^{\mathfrak{N}} w(B(\x,r)) \quad \text{for} \ \x \in \RN, \ t,r>0,
\end{equation}
where $B(\x,r) = \{\y \in \RN : \|\x - \y\| \le r\}$ denotes the closed Euclidean ball centered at $\x$ with radius $r>0$. It follows that
\begin{equation}
	\label{eq-volume formula}
	w(B(\x,r)) \simeq r^N \prod_{\nu \in R} (|\langle \x, \nu \rangle| + r)^{k(\nu)} \gtrsim r^{\mathfrak{N}},
\end{equation}
implying that $dw$ is a doubling measure. In other words, there exists a constant $C > 0$ such that
\begin{equation}
	\label{eq-doubling}
	w(B(\x, 2r)) \le C \, w(B(\x,r)) \quad \text{for} \ \x \in \RN, \ r>0.
\end{equation}
Furthermore, from \eqref{eq-volume formula}, one obtains
\begin{equation}
	\label{eq-ratios on volumes of balls}
	\Big(\frac{R}{r}\Big)^N \lesssim \frac{w(B(\x,R))}{w(B(\x,r))} \lesssim \Big(\frac{R}{r}\Big)^{\mathfrak{N}} \quad \text{for} \ \x \in \RN, \ 0<r<R.
\end{equation}
Next, we define the distance between two $G$-orbits $\mathcal O(\x)$ and $\mathcal O(\y)$ by
\begin{equation}\label{eq- d distance}
	d(\x,\y) = \min_{\sigma \in G} \|\x - \sigma(\y)\|.
\end{equation}
It is immediate that
\[
\mathcal O(B(\x,r)) = \{\y \in \RN : d(\x,\y) < r\},
\]
and moreover, for all $\x \in \RN$ and $r>0$,
\[
w(B(\x,r)) \le w(\mathcal O(B(\x,r))) \le |G| \, w(B(\x,r)).
\]
We will also write
\[
B^d(\x,r) = \{\y \in \RN : d(\x,\y) < r\}, \quad \text{so that} \quad B^d(\x,r) = \mathcal O(B(\x,r)).
\]
Let $E(\x,\y)$ denote the Dunkl kernel introduced in \cite{D3}. This kernel extends uniquely to a holomorphic function on $\mathbb C^N \times \mathbb C^N$. For any real number $0<p<\infty$, we denote by $L^p(dw)$ the space of measurable functions $f$ for which
\[
\|f\|_{L^p(dw)} := \Big(\int_{\RN} |f(\x)|^p \, dw(\x)\Big)^{1/p} < \infty. 
\] 
The Dunkl transform of a $L^1(dw)$ function $f$ is defined as
\begin{equation}
	\label{eq-Dunkl transform}
	\mathcal F f(\xi) = c_k^{-1} \int_{\mathbb R^N} E(-i\xi, \x) f(\x) \, dw(\x),
\end{equation}
where
\[
c_k = \int_{\mathbb R^N} e^{-  {\|\x\|^2/2}} dw(\x) > 0.
\]
It is well-known that the Dunkl transform extends to an isometry on $L^2(dw)$ and preserves the Schwartz space $\mathscr S(\RN)$. Its inverse is given by
\[
\F^{-1} g(\x) = c_k^{-1} \int_{\RN} E(i\xi, \x) g(\xi) \, dw(\xi).
\]
For further details, we refer the reader to \cite{DJ, R4}.

For $\x \in \RN$, the Dunkl translation operator $\bm{\tau}_\x$ is defined by
\begin{equation}\label{eq-translation}
	 \bm{\tau}_\x f(\y) = c_k^{-1} \int_{\RN} E(i\xi, \x) E(i\xi, \y) \, \F f(\xi) \, dw(\xi),
\end{equation}
and it is bounded on $L^2(dw)$. Moreover,  {for $\x,\y \in \RN$,} it is proved in \cite{Trim} that there exists a distribution $\gamma_{\x,\y}$ such that
\[
\bm{\tau}_\x f(\y) = \langle f, \gamma_{\x,\y} \rangle.
\] 
In addition, \cite[Theorem 5.1]{AAS} shows that the support of $\gamma_{\x,\y}$ is contained in the spherical shell
\begin{equation}
	\label{eq-support of gamma}
	\{ z \in \RN :  {|\|\x\| - \|\y\|| \le \|\z\| \le \|\x\| + \|\y\| }\} .
\end{equation}
We adopt the shorthand
\[
g(\x,\y) = \bm{\tau}_\x g(-\y) = \bm{\tau}_{-\y} g(\x).
\]
The Dunkl convolution of two suitable functions is defined by (see \cite{TX1})
\begin{equation}\label{eq-convolution defn}
	f * g(\x) = c_k \F^{-1}[\F f \, \F g](\x) = \int_{\RN} \F f(\xi) \, \F g(\xi) \, E(\x, i\xi) \, dw(\xi),
\end{equation}
or equivalently,
\begin{equation}\label{eq-defn f star g}
	f * g(\x) = \int_{\RN} f(\y) \, \bm{\tau}_\x g(-\y) \, dw(\y) = \int_{\RN} f(\y) \, g(\x, \y) \, dw(\y).
\end{equation}

We next define the Dunkl operators $T_\xi$, acting on a function $f$, as
\[
T_\xi f(\x) = \partial_\xi f(\x) + \sum_{\nu \in R} \frac{k(\nu)}{2} \langle \nu, \xi \rangle \frac{f(\x) - f(\sigma_\nu(\x))}{\langle \nu, \x \rangle},
\]
where $\partial_\xi f$ is the directional derivative of $f$ in the direction of the vector $\xi$. For the canonical basis $\{e_1, \dots, e_N\}$ of $\RN$, we set $T_j = T_{e_j}$. The Dunkl Laplacian is given by
\[
\begin{aligned}
	L &:= -\sum_{j=1}^N T_j^2 \\
	  &= - \Delta_{\RN} - \sum_{\nu \in R} k(\nu) \, \delta_\nu f(\x),
\end{aligned}
\]
where $\Delta_{\RN}$ denotes the usual Laplacian on $\RN$, and
\[
\delta_\nu f(\x) = \frac{\partial_\nu f(\x)}{\langle \nu, \x \rangle} - \frac{f(\x) - f(\sigma_\nu(\x))}{\langle \nu, \x \rangle^2}.
\]

It is known that $L$ defines a non-negative, self-adjoint operator on $L^2(dw)$ and generates a semigroup $e^{-tL}$ with kernel
\[
h_t(\x,\y): = c_k^{-1} (2t)^{-\mathfrak{N}/2} \exp\Big(- \frac{\|\x\|^2 + \|\y\|^2}{4t}\Big) E\Big(\frac{\x}{\sqrt{2t}}, \frac{\y}{\sqrt{2t}}\Big), \quad \x, \y \in \RN, \ t>0.
\]
It is well-known that 
\[
\int_{\RN}h_t(\x,\y)dw(\y)=\int_{\RN}h_t(\y,\x)dw(\y)=1
\]
for all $\x\in \RN$ and $t>0$. In addition,  there are constants $C, c>0$ such that
\begin{equation}\label{eq-Gaussian upper}
h_t(\x,\y)\le C\f{1}{w(B(\x,\sqrt t))}\exp\Big(-\f{d(\x,\y)^2}{ct}\Big)
\end{equation}
for all $\x,\y\in \RN$ and $t>0$. For further details, we refer to \cite{AH,R,DH2}.

\subsection{Main results}

Owing to the rapid developments in analytic Dunkl theory, it is natural to investigate nonlinear equations associated with the Dunkl Laplacian, including the Euler and Navier--Stokes systems, as well as dispersive equations such as the Korteweg--de Vries equation in the Dunkl setting. The Dunkl Laplacian also comes into play in the study of smoothing properties of Schr\"odinger semigroups, regularity theory, and scattering phenomena within the Dunkl framework. These directions have recently attracted considerable attention (see, for example, \cite{AAS, Hej}) and are expected to remain an active area of research in the coming years. In this context, the study of fractional Leibniz inequalities associated with the Dunkl Laplacian arises naturally. As in several other settings, fractional Leibniz inequalities for the Dunkl Laplacian were previously investigated in \cite{BM}, although only in the framework of Lebesgue spaces. Motivated by recent advances concerning fractional Leibniz inequalities for nonnegative self-adjoint operators on Besov and Triebel--Lizorkin spaces over spaces of homogeneous type \cite{B2}, it is natural to seek analogous results in the Dunkl setting within the broader class of Besov and Triebel--Lizorkin spaces.

At first glance, as in the case of self-adjoint operators on Lie groups, the Hermite operators, or the Grushin operators, one might expect such estimates to follow directly from the general theory developed in \cite{B2}. However, this is not the case. Indeed, the assumptions of H\"older continuity and Gaussian upper bounds for higher-order derivatives of the associated heat kernel, required in \cite{B2}, do not hold in the Dunkl framework. Consequently, the existing general theory cannot be applied directly to obtain the desired results. The primary objective of the present work is therefore to establish corresponding fractional Leibniz rules for the Dunkl Laplacian within the broader functional setting of Besov and Triebel--Lizorkin spaces, in the full range of parameters, without restrictions on the smoothness index $s$ or on the integrability and summability parameters $p$ and $q$.

In what follows, $\dot B^{s,L}_{p,q}(dw)$, $\dot F^{s,L}_{p,q}(dw)$ and $H^p_L(dw)$ denote the Besov, Triebel--Lizorkin, and Hardy spaces associated with the Dunkl Laplacian $L$, respectively. See Section \ref{subsec: Besov and other spaces} for precise definitions of these function spaces. Our main result concerns the fractional Leibniz rule for the Dunkl Laplacian $L$ on Besov and Triebel--Lizorkin spaces, which is stated as follows:

\begin{thm}\label{main thm}
	Let $0<q\le \vc$, $0 < p,p_1,p_2,p_3,p_4 \leq \infty$ with 
	\begin{equation}\label{eq- pi condition}
		\frac{1}{p} = \frac{1}{p_1} + \frac{1}{p_2} = \frac{1}{p_3} + \frac{1}{p_4}.
	\end{equation}

\noindent If $0<p_1,p_4<\vc$, $0<p_2,p_3\le \vc$, and $s > \mathfrak{N}\Big(\f{1}{\min\{p,q,1\}}-1\Big)$, then
	\begin{equation}\label{eq-main thm TL spaces}
		\|L^{s/2}(fg)\|_{\dot{F}^{0,L}_{p,q}(dw)} \simeq \|fg\|_{\dot{F}^{s,L}_{p,q}(dw)} 
		\lesssim \|f\|_{\dot{F}^{s,L}_{p_1,q}(dw)} \|g\|_{H^{p_2}_L(dw)} 
		+ \|f\|_{H^{p_3}_L(dw)} \|g\|_{\dot{F}^{s,L}_{p_4,q}(dw)}. 
	\end{equation}
	
	\medskip
	\noindent If $0 < p,p_1,p_2,p_3,p_4 \leq \infty$ and $s > \mathfrak{N}\Big(\f{1}{\min\{p,1\}}-1\Big)$, then
	\begin{equation}\label{eq-main thm B spaces}
		\|L^{s/2}(fg)\|_{\dot{B}^{0,L}_{p,q}(dw)}\simeq \|fg\|_{\dot{B}^{s,L}_{p,q}(dw)} 
		\lesssim \|f\|_{\dot{B}^{s,L}_{p_1,q}(dw)} \|g\|_{H^{p_2}_L(dw)} 
		+ \|f\|_{H^{p_3}_L(dw)} \|g\|_{\dot{B}^{s,L}_{p_4,q}(dw)}. 
	\end{equation}
	In both inequalities, $H^{p_i}_L(dw)$ is replaced by $L^\infty$ if $p_i=\infty$  for $i=2,3$.
\end{thm}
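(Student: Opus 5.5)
The equivalence $\|L^{s/2}(fg)\|_{\dot F^{0,L}_{p,q}}\simeq\|fg\|_{\dot F^{s,L}_{p,q}}$ (and its Besov analogue) is the lifting property of the spaces. It follows directly from the definitions in Section \ref{subsec: Besov and other spaces}: replace $\varphi_j(\sqrt L)$ by $\varphi_j(\sqrt L)L^{-s/2}\cdot L^{s/2}$, where $\lambda^{-s}\varphi_j(\lambda)=2^{-js}\tilde\varphi_j(\lambda)$, and invoke independence of the spaces from the choice of Littlewood--Paley function. The real content is the product estimate, which I would prove by a paraproduct decomposition adapted to the Dunkl transform.

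Fix $\varphi\in C_c^\infty$ supported in $\{1/2\le|\xi|\le 2\}$ with $\sum_j\varphi(2^{-j}\xi)=1$. Set $\Delta_j=\varphi(2^{-j}\sqrt L)$, so that $\Delta_j f=\F^{-1}[\varphi(2^{-j}|\cdot|)\F f]$, and $S_j=\sum_{i\le j-3}\Delta_i$. Write
\[
fg=\sum_j \Delta_j f\,S_j g+\sum_j S_j f\,\Delta_j g+\sum_{|i-j|\le 2}\Delta_i f\,\Delta_j g=:\Pi_1+\Pi_2+\Pi_3 .
\]
The key structural fact replaces the Euclidean support rule $\operatorname{supp}\widehat{uv}\subset\operatorname{supp}\hat u+\operatorname{supp}\hat v$. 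If $\F u$ is supported in $\{|\xi|\le a\}$ and $\F v$ in $\{|\xi|\le b\}$, then $\F(uv)$ is supported in $\{|\xi|\le a+b\}$. I would prove this via the product formula $E(i\xi,\x)E(i\eta,\x)$ and the support of $\gamma_{\x,\y}$ in \eqref{eq-support of gamma}: $\F(uv)$ is a (generalized) Dunkl convolution of $\F u$ and $\F v$, and translations move radial supports only within the shell. The lower bound in the shell gives the annulus version. If $\F u$ lives in $\{|\xi|\sim 2^j\}$ and $\F v$ in $\{|\xi|\le 2^{j-3}\}$, then $\F(uv)$ lives in $\{2^{j-2}\le|\xi|\le 2^{j+2}\}$, since the support is contained in $\{\,||\xi|-|\eta||\le|\zeta|\le|\xi|+|\eta|\}$. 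Hence $\Delta_k(\Delta_jf\,S_jg)=0$ unless $|k-j|\le 3$, while $\Delta_k$ of the diagonal terms vanishes unless $j\ge k-4$.

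For $\Pi_1$, I would use $|S_jg|\lesssim \mathcal M g$, or $\|g\|_\infty$ when $p_2=\infty$, where $\mathcal M g=\sup_t|e^{-tL}g|$ is the grand maximal function controlling $H^{p_2}_L$. This uses the Gaussian bound \eqref{eq-Gaussian upper} and the fact that $S_j$ is a nice spectral multiplier with kernel bounds in terms of $d(\x,\y)$. I would also bound $|\Delta_k(\Delta_jf S_jg)|\lesssim \mathcal M_r(\Delta_jf\,S_jg)$ for $r<\min\{p,q\}$, which comes from the Peetre-type maximal estimate for band-limited functions (whose Dunkl spectrum lies in a ball) on the doubling space $(\RN,d,w)$. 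Then $2^{ks}|\Delta_k\Pi_1|\lesssim\sum_{|j-k|\le3}\mathcal M_r(2^{js}|\Delta_jf|\,\mathcal Mg)$. The Fefferman--Stein vector-valued inequality, followed by H\"older's inequality with $1/p=1/p_1+1/p_2$, gives the first term on the right; $p_1<\infty$ is needed here. $\Pi_2$ is symmetric and gives the second term.

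For $\Pi_3$, the sum over $j\ge k-4$ of $2^{ks}\Delta_k(\Delta_jf\,\tilde\Delta_jg)$ is where $s>\mathfrak N(1/\min\{p,q,1\}-1)$ enters. I would use $|\Delta_k F|\lesssim 2^{(j-k)\mathfrak N(1/r-1)}\mathcal M_r F$ for $F$ with spectrum in $\{|\xi|\le 2^{j+3}\}$, $r<\min\{p,q,1\}$. This is the Plancherel--P\'olya/Peetre estimate in the Dunkl setting, derived from $L^r\to L^1$ estimates on balls of radius $2^{-j}$ and the volume bound \eqref{eq-ratios on volumes of balls}. This yields a geometric factor $2^{-(j-k)(s-\mathfrak N(1/r-1))}$, summable in $j$. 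Then Fefferman--Stein and H\"older finish as before; in the Besov case only $\ell^q$ outside $L^p$ is needed, so the condition involves only $\min\{p,1\}$.

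I expect the main obstacle to be the spectral support lemma together with the band-limited maximal estimates. Dunkl kernels lack H\"older regularity in $d$, so the Peetre maximal inequality must be derived from the Dunkl transform, via band-limited reproducing formulas and the support property \eqref{eq-support of gamma}, rather than from kernel smoothness as in \cite{B2}.
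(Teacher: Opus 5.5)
Your proposal is correct, modulo one fixable point noted at the end, but it is organized differently from the paper's proof.

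\textbf{The paper's route.} The paper splits the product into only two pieces,
$fg=\sum_k\psi_k(\sqrt L)f\,\phi_k(\sqrt L)g+\sum_j\phi_{j-1}(\sqrt L)f\,\psi_j(\sqrt L)g$,
so the diagonal interactions are absorbed into each piece. It uses only the outer radius of the shell \eqref{eq-support of gamma}. Lemma \ref{lem1} gives $\psi_k(\sqrt L)f\,\phi_k(\sqrt L)g=\phi_{k+3}(\sqrt L)[\psi_k(\sqrt L)f\,\phi_k(\sqrt L)g]$, and this is expanded by the sampling formula of Lemma \ref{lem-kernel-representation} at scale $2^{-k}$. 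Output frequencies $\ell\ge k$ are controlled by moving $L^m$ onto $\widetilde\Gamma_k$, which gives decay $2^{-2m(\ell-k)}$. Frequencies $\ell<k$ are controlled by the molecular summation Lemma \ref{lem1- thm2 atom Besov}, at a cost of $2^{\N(k-\ell)(1/r-1)}$. Thus $s>\N(1/r-1)$ is used for the entire product.

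\textbf{Your route.} You run the full Bony decomposition and also use the inner radius $\bigl|\|\x\|-\|\y\|\bigr|$ of the shell. This is legitimate given \eqref{eq-support of gamma}, and it yields $\Delta_k(\Delta_jf\,S_jg)=0$ for $|k-j|>3$. Hence the paraproducts need no restriction on $s$. The smoothness condition enters only through the resonant term, via a Plancherel--P\'olya/Peetre estimate for band-limited functions that plays the role of Lemmas \ref{lem-kernel-representation} and \ref{lem1- thm2 atom Besov}.

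\textbf{Comparison.} Your route isolates exactly where the condition on $s$ is needed and mirrors the Euclidean proof. The paper's route needs only ball-type spectral support, and it avoids proving a separate maximal inequality for arbitrary band-limited products. That inequality is less of an obstacle than you expect. By Lemma \ref{lem1}, such an $F$ satisfies $F=\varphi(2^{-j}\sqrt L)F$. The Euclidean-derivative bounds of Lemma \ref{lem-heat kernel estimate}(a) then let the standard Peetre argument run, with perturbations taken in Euclidean balls and afterwards enlarged to orbit balls using the $G$-invariance of $w$. No H\"older regularity in $d$ is required.

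\textbf{One correction.} $\sup_{t>0}|e^{-tL}g|$ is the radial maximal function, not a grand maximal function. The pointwise bound $|S_jg|\lesssim\sup_{t>0}|e^{-tL}g|$ is not justified as stated. Use instead the trivial bound $|S_jg(\x)|\le\sup_{t>0}\phi^*_\lambda(t\sqrt L)g(\x)$ with $\lambda>2\N/\min\{p_2,1\}$, together with Lemma \ref{lem-Hardy}, as the paper does. For $\tilde\Delta_jg$ in $\Pi_3$, note that Lemma \ref{lem-Hardy} requires $\varphi(0)\neq0$. So write $\tilde\Delta_j$ as a difference of two low-pass multipliers equal to $1$ at the origin.
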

\begin{rem}
In view of the identities $H^p_L(dw)=\dot{F}^{0,L}_{p,2}(dw)$ for all $0<p<\infty$ and $H^p_L(dw)=L^p(dw)$ for $1<p<\infty$ (see Section \ref{subsec: Besov and other spaces}), the above theorem recovers the results of \cite{BM} as a particular case.
 \end{rem}
As mentioned earlier, Theorem \ref{main thm} cannot be derived from the general results in \cite[Corollary 1.2]{B2}, since the heat kernel associated with the Dunkl Laplacian fails to satisfy the H\"older continuity assumption assume in \cite{B2}. More importantly, assumption (A3) in \cite{B2}, which plays a crucial role in obtaining the full range of the smoothness index $s$ in the fractional Leibniz rules, does not hold in our setting. In order to overcome this difficulty, we employ the connection of the spectral multiplier and the Dunkl transform, and  the support of the distribution associated with the Dunkl translation as in \eqref{eq-support of gamma}.

We conclude this section with an overview of the rest of the paper. Section \ref{subsec:Maximal functions and related inequalities} collects results on maximal functions and related vector valued inequalities. In Section \ref{subsec: Dyadic cubes}, we present dyadic cubes and auxiliary estimates in Dunkl setting, followed by functional calculus and kernel estimates for the operator $L$ in Section \ref{subsec: functional calculus}. Section \ref{subsec: Besov and other spaces} introduces Besov, Triebel–Lizorkin, and Hardy spaces associated with $L$, together with essential properties used in later arguments. Finally, Section \ref{sec: proof of main thm} contains the proof of the main theorem.

\section{Auxiliary definitions and results}

In this section, we collect several auxiliary definitions and results which will be used throughout the paper. 

\subsection{Maximal functions and related inequalities}\label{subsec:Maximal functions and related inequalities}

Let $0<r<\infty$. The Hardy--Littlewood maximal operator $\mathcal{M}^{\mathcal O}_{r}$ is defined by
\begin{equation}\label{eq-maximal function}
	\mathcal{M}^{\mathcal O}_{r} f(\x)
	=\sup_{ { \mathcal O(B)\ni \x}}\Big(\frac{1}{w(\mathcal O(B))}\int_{\mathcal O(B)}|f(\y)|^r\,dw(\y)\Big)^{1/r},
\end{equation}
where the supremum is taken over all balls $B$  such that $\x\in \mathcal O(B)$. We write $\mathcal{M}^{\mathcal O}$ instead of $\mathcal{M}^{\mathcal O}_1$.  
It is well known that
\begin{equation}\label{boundedness maximal function}
	\|\mathcal{M}^{\mathcal O}_{r} f\|_{L^p(dw)}\lesssim \|f\|_{L^p(dw)}
\end{equation}
for all $p>r$.

We also recall the Fefferman--Stein vector-valued maximal inequality from \cite{GLY}.  
If $0<p<\infty$, $0<q\le \infty$, and $0<r<\min\{p,q\}$, then for any sequence $\{f_{ {i}}\}$ of measurable functions,  
\begin{equation}\label{FSIn}
	\Big\|\Big(\sum_{{ {i}}}|\mathcal{M}^{\mathcal O}_r f_{ {i}}|^q\Big)^{1/q}\Big\|_{L^p(dw)}
	\lesssim
	\Big\|\Big(\sum_{{ {i}}}|f_{ {i}}|^q\Big)^{1/q}\Big\|_{L^p(dw)}.
\end{equation}
Combining Young’s inequality with \eqref{FSIn}, we obtain that if $\{a_{ {i}}\}\in \ell^{q}\cap \ell^{1}$, then
\begin{equation}\label{YFSIn}
	\Big\|\sum_{j}\Big(\sum_{{i}}|a_{j-{{i}}}\mathcal{M}^{\mathcal O}_r f_{{i}}|^q\Big)^{1/q}\Big\|_{L^p(dw)}
	\lesssim
	\Big\|\Big(\sum_{{{i}}}|f_{{i}}|^q\Big)^{1/q}\Big\|_{L^p(dw)}.
\end{equation}

We record the following elementary estimate for later use (see, e.g., \cite{BBD}).

\begin{lem}\label{lem-elementary}
	Let $\epsilon>0$. For any $f\in L^1_{\mathrm{loc}}({\RN})$, $\x\in \mathbb{R}^N$, and $s>0$, we have
	\[
	\int_{\mathbb{R}^N}\frac{1}{w(B(\x,s))}\Big(1+\frac{d(\x,\y)}{s}\Big)^{-(\mathfrak{N}+\epsilon)}|f(\y)|\,d\y
	\lesssim \mathcal{M}^{\mathcal O}f(\x).
	\]
\end{lem}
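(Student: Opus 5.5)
We read the integral with respect to $dw(\y)$, which is the measure used throughout the paper. The argument is a standard dyadic decomposition in the orbit distance $d$. The two facts it rests on are:
\begin{itemize}
\item the orbit ball $B^d(\x,r)=\mathcal O(B(\x,r))$ contains $\x$, so it is admissible in the supremum defining $\mathcal M^{\mathcal O}$;
\item its measure is comparable to $w(B(\x,r))$.
\end{itemize}

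\emph{Decomposition.} Write $\RN = B^d(\x,s)\cup\bigcup_{j\ge1}A_j$, where $A_j=\{\y: 2^{j-1}s\le d(\x,\y)<2^{j}s\}\subset B^d(\x,2^js)$. On $B^d(\x,s)$ the factor $(1+d(\x,\y)/s)^{-(\mathfrak N+\epsilon)}$ is at most $1$. On $A_j$ it is at most $2^{-(j-1)(\mathfrak N+\epsilon)}$. Hence the integral is bounded by
\[
\sum_{j\ge0} 2^{-(j-1)(\mathfrak N+\epsilon)}\frac{1}{w(B(\x,s))}\int_{B^d(\x,2^js)}|f(\y)|\,dw(\y).
\]

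\emph{Bounding each term.} Since $\x\in\mathcal O(B(\x,2^js))=B^d(\x,2^js)$, the definition \eqref{eq-maximal function} with $r=1$ gives
\[
\int_{B^d(\x,2^js)}|f|\,dw\le w(\mathcal O(B(\x,2^js)))\,\mathcal M^{\mathcal O}f(\x).
\]
Moreover $w(\mathcal O(B(\x,2^js)))\le |G|\,w(B(\x,2^js))$. By the upper bound in \eqref{eq-ratios on volumes of balls},
\[
\frac{w(B(\x,2^js))}{w(B(\x,s))}\lesssim 2^{j\mathfrak N}.
\]
Therefore the $j$-th term is bounded by $C|G|\,2^{\mathfrak N+\epsilon}\,2^{-j\epsilon}\,\mathcal M^{\mathcal O}f(\x)$.

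\emph{Summation.} Summing the geometric series $\sum_j 2^{-j\epsilon}<\infty$ yields the bound $\lesssim \mathcal M^{\mathcal O}f(\x)$. The implicit constant depends only on $\epsilon$, $|G|$ and the doubling constants.

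There is no real obstacle in this argument. The only point needing care is using the orbit balls $B^d$ rather than Euclidean balls, since the kernel decays in $d$ rather than in $\|\cdot\|$. This is exactly why the maximal function is defined over orbits $\mathcal O(B)$, and why the comparability $w(\mathcal O(B))\simeq w(B)$ is needed.
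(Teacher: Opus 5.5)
Your proof is correct. The paper gives no proof of this lemma and only points to \cite{BBD}; your decomposition into dyadic annuli in $d$, combined with the volume bound \eqref{eq-ratios on volumes of balls} and the comparability $w(\mathcal O(B))\le |G|\,w(B)$, is the standard argument behind it. Reading the integral against $dw(\y)$ is the intended interpretation, and it is in fact necessary: with Lebesgue measure $d\y$ the estimate fails for $f$ concentrated near a reflecting hyperplane, where $dw$ degenerates.
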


\subsection{Dyadic cubes and auxiliary estimates}\label{subsec: Dyadic cubes}

A \emph{dyadic cube} in $\mathbb{R}^N$ is a cube of the form
\[
Q = 2^{-k}([m_1, m_1+1)\times\cdots\times [m_N, m_N+1)),
\]
where $k\in\mathbb{Z}$ and $m_1,\dots,m_N\in\mathbb{Z}$.  
We denote by $\x_Q=2^{-k}(m_1,\ldots,m_N)$ the lower-left corner of $Q$ and by $\ell(Q)=2^{-k}$ its side length.  
Let $\mathscr{D}=\{Q\}$ denote the family of all dyadic cubes, and for each $k\in\mathbb{Z}$, set
\[
\mathscr{D}_k=\{Q\in \mathscr D:\,\ell(Q)=2^{-k}\}.
\]
We next state the following key lemma, which provides a discrete molecular summation estimate by means of the maximal operator.
\begin{lem}[\cite{B}]\label{lem1- thm2 atom Besov}
	Let $n_0\in\mathbb{Z}$ and $\mathscr D=\{\mathscr D_k\}_{k\in\mathbb{Z}}$ be the dyadic cubes in $\mathbb{R}^N$.  
	Fix $M>\mathfrak{N}$, $\kappa\in [0,1]$, and $\eta,k\in\mathbb{Z}$ with $k\ge \eta$.  
	Assume that $\{f_Q\}_{Q\in\mathscr{D}_k}$ satisfies
	\begin{equation}\label{eq-bound of fQ}
		|f_{Q}(\x)|\lesssim
		\Big(\frac{w(Q)}{w(B(\x_Q,2^{-\eta}))}\Big)^\kappa
		\Big(1+\frac{d(\x,\x_Q)}{2^{-\eta}}\Big)^{-M}.
	\end{equation}
	Then, for $\mathfrak{N}/M<r\le 1$ and any sequence $\{s_Q\}_{Q\in \mathscr{D}_k}$,
	\[
	\sum_{Q\in \mathscr{D}_{k+n_0}}|s_Q||f_Q(\x)|
	\lesssim
	2^{\mathfrak{N}(k-\eta)(1/r-\kappa)}
	\mathcal{M}^{\mathcal O}_r\Big(\sum_{Q\in \mathscr{D}_{k+n_0}}|s_Q|\chi_Q\Big)(\x).
	\]
\end{lem}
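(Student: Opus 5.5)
We prove Lemma \ref{lem1- thm2 atom Besov} by the standard annular decomposition argument, adapted to the orbit distance $d$ and the doubling measure $dw$. Fix $\x\in\RN$ and write $\mathscr D_{k+n_0}$ for the relevant cubes, each of side length $2^{-(k+n_0)}$. Using the orbit distance $d(\x,\x_Q)$, we split the cubes into the layers
\[
\mathcal A_0=\{Q: d(\x,\x_Q)\le 2^{-\eta}\},\qquad \mathcal A_j=\{Q: 2^{j-1-\eta}<d(\x,\x_Q)\le 2^{j-\eta}\},\quad j\ge1.
\]
On $\mathcal A_j$ the hypothesis \eqref{eq-bound of fQ} gives $|f_Q(\x)|\lesssim 2^{-jM}\big(w(Q)/w(B(\x_Q,2^{-\eta}))\big)^\kappa$. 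Since $k+n_0\ge\eta$ up to the fixed shift $n_0$, every such $Q$ is contained in $B^d(\x,2^{j-\eta+c})$ for a fixed constant $c$. The point of using $d$ rather than the Euclidean distance is that $B^d(\x,r)=\mathcal O(B(\x,r))$, and orbit-balls of this form are exactly the sets over which $\mathcal M^{\mathcal O}_r$ averages.

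On each layer we use the $r$-triangle inequality ($r\le1$) together with the disjointness of the cubes:
\[
\sum_{Q\in\mathcal A_j}|s_Q|\,w(Q)^\kappa
\le \Big(\sum_{Q\in\mathcal A_j}|s_Q|^r w(Q)^{\kappa r}\Big)^{1/r}
\le \max_{Q\in\mathcal A_j} w(Q)^{\kappa-1/r}\Big(\int_{B^d(\x,2^{j-\eta+c})}\Big(\sum_Q|s_Q|\chi_Q\Big)^r dw\Big)^{1/r}.
\]
The integral on the right is at most $w(B^d(\x,2^{j-\eta+c}))^{1/r}\,\mathcal M^{\mathcal O}_r\big(\sum_Q|s_Q|\chi_Q\big)(\x)$.

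The main obstacle is handling the ratios of volumes, because $w$ is not translation invariant. Three comparisons are needed:
\begin{itemize}
\item Relating $w(B(\x_Q,2^{-\eta}))$ and $w(Q)$ to balls centred at $\x$. By doubling, $w(Q)\simeq w(B(\x_Q,2^{-k-n_0}))$.
\item Comparing $w(B(\x_Q,2^{j-\eta}))$ with $w(B(\x,2^{j-\eta}))$. Since $d(\x,\x_Q)\le 2^{j-\eta}$, some $\sigma(\x_Q)$ lies within Euclidean distance $2^{j-\eta}$ of $\x$. Using the $G$-invariance of $w$ and doubling, the two volumes are comparable up to a constant.
\item Controlling the scale changes by \eqref{eq-ratios on volumes of balls}, which gives $w(B(\x_Q,2^{j-\eta}))/w(Q)\lesssim 2^{\mathfrak N(j+k-\eta)}$ and $w(B(\x_Q,2^{j-\eta}))/w(B(\x_Q,2^{-\eta}))\lesssim 2^{\mathfrak N j}$.
\end{itemize}
Collecting the factors $w(Q)^{\kappa-1/r}$, $w(B(\x_Q,2^{-\eta}))^{-\kappa}$ and $w(B^d)^{1/r}$, and noting that $\kappa-1/r\le0$, we write the product as
\[
\Big(\tfrac{w(B^d)}{w(Q)}\Big)^{1/r-\kappa}\Big(\tfrac{w(B^d)}{w(B(\x_Q,2^{-\eta}))}\Big)^{\kappa}.
\]
This is bounded by $2^{\mathfrak N(j+k-\eta)(1/r-\kappa)}\,2^{\mathfrak N j\kappa}=2^{\mathfrak N(k-\eta)(1/r-\kappa)}2^{\mathfrak N j/r}$.

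Summing over the layers gives
\[
\sum_{Q\in\mathscr D_{k+n_0}}|s_Q||f_Q(\x)|\lesssim 2^{\mathfrak N(k-\eta)(1/r-\kappa)}\sum_{j\ge0}2^{-j(M-\mathfrak N/r)}\,\mathcal M^{\mathcal O}_r\Big(\sum_Q|s_Q|\chi_Q\Big)(\x).
\]
The geometric series converges precisely because $r>\mathfrak N/M$, and the constant depending on $n_0$ is absorbed into the implicit constant. This yields the stated estimate.
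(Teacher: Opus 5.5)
Your proof is correct and is essentially the standard annular (Frazier--Jawerth type) decomposition argument behind the result cited from \cite{B}; the paper itself only quotes the lemma. Your ingredients match it: the layers $\mathcal A_j$ in the orbit distance, the $r$-triangle inequality, disjointness of the cubes, and the volume comparisons via $G$-invariance, doubling and \eqref{eq-ratios on volumes of balls}, producing $2^{\mathfrak N(k-\eta)(1/r-\kappa)}2^{\mathfrak N j/r}$, which is summable against $2^{-jM}$ because $r>\mathfrak N/M$. One presentational point: the $Q$-dependent factor $w(B(\x_Q,2^{-\eta}))^{-\kappa}$ should stay inside the layer sum and be bounded uniformly over $\mathcal A_j$, as your second and third bullets allow, rather than being dropped from the first display and reinserted later.
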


\subsection{Functional calculus and kernel estimates}\label{subsec: functional calculus}

Since $L$ is a nonnegative self-adjoint operator on $L^2(dw)$,  
for each bounded Borel function $F:[0,\infty)\to\mathbb{C}$, we can define
\[
F(L)=\int_0^\infty F(\lambda)\,dE(\lambda),
\]
where $E(\lambda)$ is the spectral resolution of $L$. Clearly, $F(L)$ is bounded on $L^2(dw)$.

The next lemma establishes kernel estimates for the functional calculus of $L$. While the corresponding estimates without the additional decay factor
$
\left(1+t^{-1}|x-y|\right)^{-2}
$
are standard for general self-adjoint operators on spaces of homogeneous type, the presence of this additional decay relies on special structural properties of the operator $L$.

\begin{lem}[\cite{B}]\label{lem-heat kernel estimate}
	Let $\varphi\in\mathscr{S}(\mathbb{R})$ be an even function. Then:
	\begin{enumerate}[\rm(a)]
		\item For every $M>0$ and multi-indices $\alpha,\beta$, the kernel $\varphi(t\sqrt{L})(\x,\y)$ {of $\varphi(t\sqrt{L})$}  satisfies
		\[
		|\partial_{\x}^{\alpha}\partial_{\y}^{\beta}\varphi(t\sqrt{L})(\x,\y)|
		\lesssim
		t^{-(|\alpha|+|\beta|)}
		\Big(1+\frac{\|\x-\y\|}{t}\Big)^{-2}
		\frac{1}{w(B(\x,t+d(\x,\y)))}\Big(\frac{t}{t+d(\x,\y)}\Big)^{M}.
		\]
		\item For every $M>0$, if $\|\y-\y'\|\le t$, then
		\[
        \begin{aligned}
		 & |\varphi(t\sqrt{L})(\x,\y)-\varphi(t\sqrt{L})(\x,\y')|\\
        & \lesssim 
		\frac{\|\y-\y'\|}{t}
		\Big(1+\frac{\|\x-\y\|}{t}\Big)^{-2}
		\frac{1}{w(B(\x,t+d(\x,\y)))}\Big(\frac{t}{t+d(\x,\y)}\Big)^{M}.
         \end{aligned}
		\]
	\end{enumerate}
\end{lem}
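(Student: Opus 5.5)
The statement immediately above is Lemma~\ref{lem-heat kernel estimate}, a kernel estimate for $\varphi(t\sqrt L)$ with $\varphi$ even and Schwartz. My plan is to reduce everything to $t=1$ by scaling, and then to use the Dunkl-transform representation of the kernel. Since $L$ is diagonalised by $\F$ with symbol $\|\xi\|^2$, the kernel is
\[
\varphi(t\sqrt L)(\x,\y)=c_k^{-1}\int_{\RN}\varphi(t\|\xi\|)E(i\xi,\x)\overline{E(i\xi,\y)}\,dw(\xi)=\bm{\tau}_{\x}\Phi_t(-\y),
\]
where $\Phi_t=\F^{-1}[\varphi(t\|\cdot\|)]$ is radial and Schwartz. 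Because $\varphi$ is even, $\xi\mapsto\varphi(\|\xi\|)$ is Schwartz on $\RN$. Homogeneity \eqref{eq-homgoenous measure} gives $\Phi_t(\x)=t^{-\mathfrak N}\Phi_1(\x/t)$, and the kernel scales accordingly. It therefore suffices to prove (a) and (b) for $t=1$ and then rescale, keeping track of the factor $t^{-(|\alpha|+|\beta|)}$ and of $w(B(\x,t+d))$.

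\emph{Step 1: the standard factor.} First I would establish the bound without the extra decay,
\[
|\Phi_1(\x,\y)|\lesssim w(B(\x,1+d(\x,\y)))^{-1}(1+d(\x,\y))^{-M}.
\]
This follows from the known estimates for Dunkl translations of radial Schwartz functions. One route is to write $\Phi_1$ as a superposition of heat kernels (a subordination or Stieltjes representation) and use \eqref{eq-Gaussian upper}. Alternatively, one can invoke Rösler's positive formula for translations of radial functions, $\bm\tau_\x\Phi(-\y)=\int\tilde\Phi(A(\x,\y,\eta))\,d\mu_\x(\eta)$, together with the orbit-distance lower bound $A\ge d(\x,\y)$.

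\emph{Step 2: derivatives.} Next I would differentiate under the integral. Differentiating in $\x$ or $\y$ can be handled as follows. Since $\partial_{\x_j}$ of $E(i\xi,\x)$ is not a Dunkl operator, I would instead use the identity $\partial_j=T_j-(\text{difference terms})$. It is easier, though, to use the estimate $|\partial^\alpha_\x E(i\xi,\x)|\le\|\xi\|^{|\alpha|}$ directly through the positive integral representation $E(i\xi,\x)=\int e^{i\langle\xi,\eta\rangle}d\mu_\x(\eta)$. This expresses $\partial^\alpha_\x\partial^\beta_\y$ of the kernel as a translation of $\F^{-1}[\xi^{\alpha+\beta}\varphi(\|\xi\|)]$-type Schwartz functions, which are no longer radial. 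For non-radial functions the support property \eqref{eq-support of gamma} of $\gamma_{\x,\y}$ gives $\bm\tau_\x f(-\y)=\langle f,\gamma_{\x,-\y}\rangle$ with $f$ evaluated only on $\|\z\|\ge|\|\x\|-\|\y\||$.

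\emph{Step 3: the extra factor $(1+\|\x-\y\|)^{-2}$.} This is the main obstacle, because $d(\x,\y)$ can be small while $\|\x-\y\|$ is large. I would try an integration-by-parts trick in $\xi$: $\|\x-\y\|^2$ times the kernel corresponds to applying $\sum_j(T^{\xi}_j)^2$-type operators on the transform side. Using $T^\xi_jE(i\xi,\x)=i\x_jE(i\xi,\x)$, the factor $\|\x\|^2+\|\y\|^2-2\langle\x,\y\rangle$ can be produced by applying the Dunkl Laplacian in $\xi$ and commuting. The cross term $\langle\x,\y\rangle$ is the delicate part, and the reflection difference terms must be controlled by the Step 1 bound applied to $\sigma_\nu$-reflected variables. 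After this, (b) follows from (a) with $|\beta|=1$ via the mean value theorem along the segment $[\y,\y']$, since $\|\y-\y'\|\le t$ keeps $\|\x-\y\|$ and $d(\x,\y)$ comparable along that segment.
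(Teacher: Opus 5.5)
The paper does not prove this lemma; it quotes it from \cite{B}, where it rests on estimates for Dunkl translations of radial kernels. Your sketch therefore has to stand on its own. Your scaling reduction, your Step~1 via R\"osler's formula, and your deduction of (b) from (a) are fine. In Step~1, note that a sign-changing $\varphi$ is not itself a superposition of heat kernels; only a radial majorant $(1+r^2)^{-a}$ of the profile is, and that combined with $A\ge d(\x,\y)$ gives the bound.

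The genuine gap is Step~2. The relation $T_{j,\x}E(i\xi,\x)=i\xi_jE(i\xi,\x)$ holds for Dunkl operators, not for $\partial_{\x_j}$. So $\partial^\alpha_\x\partial^\beta_\y$ of the kernel is \emph{not} a Dunkl translation of $\F^{-1}[\xi^{\alpha+\beta}\varphi(\|\xi\|)]$. The bound $|\partial^\alpha_\x E(i\xi,\x)|\le\|\xi\|^{|\alpha|}$ (which comes from the representation with $\mu_\xi$, since $\mu_\x$ depends on $\x$) yields only $|\partial^\alpha_\x\partial^\beta_\y\Phi_1(\x,\y)|\lesssim\int_{\RN}\|\xi\|^{|\alpha|+|\beta|}|\varphi(\|\xi\|)|\,dw(\xi)$. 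That has no decay and no factor $w(B(\x,1+d(\x,\y)))^{-1}$.

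The support property \eqref{eq-support of gamma} would not rescue a non-radial translation either. It says only that $\bm\tau_\x f(-\y)$ depends on $f$ restricted to $\{\|\z\|\ge|\|\x\|-\|\y\||\}$. Nothing controls the size of $\gamma_{\x,\y}$ for non-radial $f$, so a cutoff argument gives at best decay in $|\|\x\|-\|\y\||$. That quantity can be far smaller than $d(\x,\y)$, and you still get no volume factor.

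What works is to differentiate under R\"osler's integral. The measure $\mu_\x$ does not depend on $\y$, and $A^2=\|\x\|^2+\|\y\|^2-2\langle\y,\eta\rangle$ is a polynomial in $\y$ with $\|\y-\eta\|\le A$ on $\supp\mu_\x$. Writing the radial profile as $\Phi_0(A^2)$, each $\partial^\beta_\y$ again produces a radial Schwartz majorant evaluated at $A$, and Step~1 applies. The $\x$-derivatives follow by symmetry of the kernel. Mixed derivatives follow by writing $\varphi(\sqrt L)$ as a composition of two such operators, which is immediate when $\varphi$ has compact support.

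Step~3 is a heuristic, not an argument. Dunkl operators in $\xi$ obey no product rule on $E(i\xi,\x)E(-i\xi,\y)$, so applying the Dunkl Laplacian in $\xi$ does not produce $\|\x-\y\|^2$, and you leave the cross term open. A device that does work is a double commutator:
\begin{enumerate}[\rm(i)]
\item The kernel of $\sum_j[\x_j,[\x_j,\varphi(\sqrt L)]]$ is $\|\x-\y\|^2K(\x,\y)$, and $\F(\x_jf)=iT_j\F f$.
\item Because $\varphi(\|\xi\|)$ is $G$-invariant, the inner commutator is the multiplier $i\partial_j[\varphi(\|\xi\|)]=i\tilde\varphi(\|\xi\|)\xi_j$, where $\tilde\varphi(r)=\varphi'(r)/r$.
\item The outer commutator gives $-\partial_j^2[\varphi(\|\xi\|)]$ plus terms involving $\F f(\sigma_\nu\xi)$.
\item Since $\xi-\sigma_\nu\xi=\langle\xi,\nu\rangle\nu$ and $\|\nu\|^2=2$, summing over $j$ leaves only radial symbols:
\[
\|\x-\y\|^2K(\x,\y)=-K_1(\x,\y)-\sum_{\nu\in R}k(\nu)K_2(\x,\sigma_\nu\y).
\]
Here $K_1,K_2$ are the kernels of $\varphi_1(\sqrt L),\varphi_2(\sqrt L)$ with $\varphi_1(r)=\varphi''(r)+\frac{N-1}{r}\varphi'(r)$ and $\varphi_2=\tilde\varphi$, both even Schwartz functions.
\end{enumerate}
Step~1 together with $d(\x,\sigma_\nu\y)=d(\x,\y)$ then gives the extra factor at $t=1$. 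The reflected kernels gain nothing in $\|\x-\y\|$, so the argument does not iterate; this is why only the power $2$ appears.

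Your sketch does not address the derivative version in (a) at all. For it, apply $\partial^\alpha_\x\partial^\beta_\y$ to the identity and induct on $|\alpha|+|\beta|$. The Leibniz terms $(\x_i-\y_i)\partial^{\alpha'}_\x\partial^{\beta'}_\y K$ are absorbed using $\|\x-\y\|\le 1+\|\x-\y\|^2$.
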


\subsection{Besov, Triebel–Lizorkin, and Hardy spaces associated with $L$}\label{subsec: Besov and other spaces} Since $L$ is a nonnegative self-adjoint operator on $L^2(\mathbb{R}^N)$ and satisfies the Gaussian upper bound \eqref{eq-Gaussian upper}, following \cite{BBD}, we can define the Besov and Triebel–Lizorkin spaces associated with $L$. Interestingly, these function spaces coincide with the classical spaces, including the Besov, Triebel–Lizorkin, Lebesgue, and Hardy spaces defined on the space of homegeneous type $(\RN, \|\cdot\|, dw)$, in the sense of \cite{HMY, HS, CW}.

The class of test functions associated with $L$ is defined by
\[
\mathcal{S}
=\Big\{\phi\in\bigcap_{m\ge 1}D(L^m):~
\mathcal{P}_{m,\ell}(\phi)
 {:=\sup_{\x\in \RN}(1+\|\x\|)^m|L^\ell \phi(\x)|}<\infty
\quad\forall m>0,\;\ell\in\mathbb{N}\Big\}.
\]
It was shown in \cite{PK} that $\mathcal{S}$ is a complete locally convex space generated by the seminorms $\{\mathcal{P}_{m,\ell}\}$, and that for the operator $- \Delta_{\RN}$ on $\mathbb{R}^N$, $\mathcal{S}$ coincides with the Schwartz class $\mathscr{S}(\mathbb{R}^N)$. The dual space $\mathcal{S}'$ of $\mathcal{S}$, consisting of continuous linear functionals on $\mathcal{S}$, is equipped with the pairing
\[
\langle f,\phi\rangle=f(\overline{\phi}),\qquad f\in\mathcal{S}',~\phi\in\mathcal{S}.
\]

For the homogeneous theory of Besov and Triebel–Lizorkin spaces associated with $L$, we follow \cite{G.etal} and define
\[
\mathcal{S}_\infty=\{\phi\in \mathcal{S}:\,\forall k\in\mathbb{N},~\exists\, g_k\in \mathcal{S}\text{ such that }\phi=L^k g_k\},
\]
whose topology is generated by seminorms $\mathcal{P}^*_{m,\ell,k}(\phi)=\mathcal{P}_{m,\ell}(g_k)$.

A \emph{partition of unity} is a function $\psi\in \mathscr{S}(\mathbb{R})$ satisfying 
$\supp \psi\subset [1/2,2]$, $$\int \psi(\xi)\frac{d\xi}{\xi}\neq 0,$$ and
\[
\sum_{j\in \mathbb{Z}}\psi_j(\lambda)=1,\qquad \lambda>0,
\]
where $\psi_j(\lambda)=\psi(2^{-j}\lambda)$.
With the above preparations in place, we now introduce the Besov and Triebel–Lizorkin spaces associated with the Dunkl Laplacian in the spirit of \cite{BBD}.

\begin{defn}
	Let $\psi$ be a partition of unity. For $0<p,q\le\infty$ and $s\in\mathbb{R}$, define
	\[
	\|f\|_{\dot{B}^{s,\psi,L}_{p,q}(dw)}
	=\Big(\sum_{j\in\mathbb{Z}}\big(2^{js}\|\psi_j(\sqrt{L})f\|_{L^p(dw)}\big)^q\Big)^{1/q}.
	\]
	The homogeneous Besov space $\dot{B}^{s,\psi,L}_{p,q}(dw)$ is the set of all $f\in \mathcal{S}'_\infty$ with $\|f\|_{\dot{B}^{s,\psi,L}_{p,q}(dw)}<\infty$.  
	Similarly,  for Triebel–Lizorkin space, we define
	\[
	\|f\|_{\dot{F}^{s,\psi,L}_{p,q}(dw)}
	=\Big\|\Big(\sum_{j\in\mathbb{Z}}(2^{js}|\psi_j(\sqrt{L})f|)^q\Big)^{1/q}\Big\|_{L^p(dw)},
	\]
	and the homogeneous Triebel--Lizorkin space $\dot{F}^{s,\psi,L}_{p,q}(dw)$ is defined by
\[
\dot{F}^{s,\psi,L}_{p,q}(dw)
=
\Big\{f\in \mathcal{S}'_\infty:
\|f\|_{\dot{F}^{s,\psi,L}_{p,q}(dw)}<\infty\Big\}.
\]
\end{defn}

Since these spaces do not depend on the choice of $\psi$ (see \cite{BBD}), we henceforth denote them by $\dot{B}^{s,L}_{p,q}(dw)$ and $\dot{F}^{s,L}_{p,q}(dw)$ instead of $\dot{B}^{s,\psi,L}_{p,q}(dw)$ and $\dot{F}^{s,\psi,L}_{p,q}(dw)$, respectively.

For later reference, we recall the following proposition from \cite{BBD}, which shows that fractional powers of $L$ continuously map Besov and Triebel–Lizorkin spaces into spaces of higher smoothness within the same scale.

\begin{prop}\label{prop- Ls on TL B spaces}
	Let $s,\alpha\in\mathbb{R}$. Then
	\[
	\|L^{s/2}f\|_{\dot{B}^{\alpha,L}_{p,q}(dw)}\simeq \|f\|_{\dot{B}^{s+\alpha,L}_{p,q}(dw)}, 
	\quad 0<p,q\le\infty,
	\]
	and
	\[
	\|L^{s/2}f\|_{\dot{F}^{\alpha,L}_{p,q}(dw)}\simeq \|f\|_{\dot{F}^{s+\alpha,L}_{p,q}(dw)}, 
	\quad 0<p<\infty,~0<q\le\infty.
	\]
\end{prop}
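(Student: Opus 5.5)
The plan is to reduce the lifting property to a change of the defining function in the Littlewood--Paley norms, and then to show that such a change is harmless by a Peetre-type maximal argument. I treat only $0<p<\infty$ in the $\dot F$ case and all $0<p\le\infty$ in the $\dot B$ case. First I would define $L^{s/2}$ on $\mathcal S'_\infty$ by duality: $\langle L^{s/2}f,\phi\rangle=\langle f,L^{s/2}\phi\rangle$. This is legitimate because $L^{s/2}$ maps $\mathcal S_\infty$ continuously onto itself, with inverse $L^{-s/2}$. The reason is that elements of $\mathcal S_\infty$ have spectral content vanishing to infinite order at $0$, so $\lambda^{s}$ causes no singularity, and the seminorms $\mathcal P^*_{m,\ell,k}$ are controlled using $\phi=L^kg_k$ together with the kernel bounds of Lemma \ref{lem-heat kernel estimate}. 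In particular $L^{-s/2}L^{s/2}f=f$ in $\mathcal S'_\infty$.

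Next, fix a partition of unity $\psi$ and put $\psi^{(s)}(\lambda)=\lambda^s\psi(\lambda)$, extended evenly to $\mathbb R$. Since $\psi$ is supported in $[1/2,2]$, away from the origin, $\psi^{(s)}$ is an even Schwartz function with the same support. Functional calculus gives
\[
\psi_j(\sqrt L)L^{s/2}f=2^{js}\psi^{(s)}_j(\sqrt L)f,\qquad \psi^{(s)}_j(\lambda)=\psi^{(s)}(2^{-j}\lambda).
\]
Hence $\|L^{s/2}f\|_{\dot B^{\alpha,L}_{p,q}}$ equals the Besov norm of $f$ with smoothness $s+\alpha$, computed with $\psi^{(s)}$ in place of $\psi$; the same holds for the $\dot F$ norm. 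The upper bound $\lesssim$ therefore follows from the following claim. If $\varphi$ is even, Schwartz and supported in $[1/2,2]$, then replacing $\psi_j$ by $\varphi_j$ in the norms gives a quantity $\lesssim\|f\|_{\dot B^{\beta,L}_{p,q}}$ (respectively $\lesssim\|f\|_{\dot F^{\beta,L}_{p,q}}$).

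To prove the claim, write $\varphi_j(\sqrt L)f=\sum_{|i-j|\le 1}\varphi_j(\sqrt L)\psi_i(\sqrt L)f$, using $\sum_i\psi_i=1$ and the support conditions. By Lemma \ref{lem-heat kernel estimate}(a), the kernel of $\varphi_j(\sqrt L)$ is bounded by $w(B(\x,2^{-j}))^{-1}(1+2^jd(\x,\y))^{-M}$. This gives
\[
|\varphi_j(\sqrt L)\psi_i(\sqrt L)f(\x)|\lesssim \psi_i^{*}f(\x):=\sup_{\y}\frac{|\psi_i(\sqrt L)f(\y)|}{(1+2^id(\x,\y))^{\mathfrak N/r}},
\]
since the remaining integral is bounded by Lemma \ref{lem-elementary} applied to the constant function. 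The key step, which I expect to be the main obstacle, is the Peetre inequality $\psi_i^*f\lesssim\mathcal M^{\mathcal O}_r(\psi_i(\sqrt L)f)$ for every $r>0$. The difficulty is that the Dunkl heat kernel lacks H\"older regularity in the orbit distance $d$.

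For the Peetre inequality I would take the following route. Pick an even Schwartz function $\theta$ with $\theta\equiv1$ on $[1/4,4]$, so that $\psi_i(\sqrt L)f=\theta_i(\sqrt L)\psi_i(\sqrt L)f$. Lemma \ref{lem-heat kernel estimate}(b) gives Lipschitz control in the Euclidean variable, with the decay factor measured in $d$. Cover the ball $B(\y,\delta2^{-i})$ and average over it; the standard argument (as in \cite{BBD}) then yields
\[
|\psi_i(\sqrt L)f(\y)|\lesssim \delta\,\psi_i^*f(\x)(1+2^id(\x,\y))^{\mathfrak N/r}+C_\delta\,(1+2^id(\x,\y))^{\mathfrak N/r}\mathcal M^{\mathcal O}_r(\psi_i(\sqrt L)f)(\x).
\]
Choosing $\delta$ small and absorbing the first term gives the inequality. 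The absorption requires a priori finiteness of $\psi_i^*f$, which holds for $f\in\mathcal S'_\infty$ by polynomial growth. With the Peetre inequality in hand, take $r<p$ in the Besov case and use \eqref{boundedness maximal function}. In the Triebel--Lizorkin case take $r<\min\{p,q\}$ and use \eqref{FSIn}, shifting indices with $|i-j|\le1$. This proves the claim.

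Finally, for the reverse inequality I apply the upper bound just proved with $-s$ in place of $s$ and $L^{s/2}f$ in place of $f$. Using $L^{-s/2}L^{s/2}f=f$, this gives $\|f\|_{\dot B^{s+\alpha,L}_{p,q}}\lesssim\|L^{s/2}f\|_{\dot B^{\alpha,L}_{p,q}}$, and the same argument gives the $\dot F$ version.
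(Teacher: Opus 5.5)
Your argument is correct up to one technical point noted below. The paper gives no proof of this proposition; it simply quotes \cite{BBD}, where the lifting property is obtained for nonnegative self-adjoint operators satisfying only the Gaussian bound \eqref{eq-Gaussian upper}. Your skeleton is the natural route in that framework: the identity $\psi_j(\sqrt L)L^{s/2}=2^{js}\psi^{(s)}_j(\sqrt L)$ with $\psi^{(s)}(\lambda)=\lambda^s\psi(\lambda)$, the almost orthogonality $\varphi_j\psi_i\equiv 0$ on $[0,\infty)$ for $|i-j|\ge 2$, Peetre-type control of $\psi_i(\sqrt L)f$, and the reverse inequality via $-s$ and $L^{-s/2}L^{s/2}f=f$.

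Where you genuinely deviate is the step you call the main obstacle. You derive Peetre's inequality from the Euclidean Lipschitz bound of Lemma \ref{lem-heat kernel estimate}(b), averaging over $B(\y,\delta 2^{-i})\subset\mathcal O(B(\x,d(\x,\y)+\delta 2^{-i}))$, whose measures differ by a factor $\lesssim\delta^{-\N}(1+2^id(\x,\y))^{\N}$. This is valid and even gives the sharper exponent $\N/r$, but it is not needed. Once you have $|\varphi_j(\sqrt L)\psi_i(\sqrt L)f|\lesssim\psi^*_{i,\lambda}(\sqrt L)f$ for $|i-j|\le 1$, with $\lambda$ as large as you wish thanks to the decay in Lemma \ref{lem-heat kernel estimate}(a), Proposition \ref{prop2-thm1} finishes the claim at once. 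Even from scratch, kernel regularity is dispensable. The $r$-trick applied to $\psi_i(\sqrt L)f=\theta_i(\sqrt L)\psi_i(\sqrt L)f$ uses only kernel decay and yields $\psi^*_{i,\lambda}(\sqrt L)f\lesssim\mathcal M^{\mathcal O}_r(\psi_i(\sqrt L)f)$ for $\lambda>2\N/r$. So the failure of H\"older continuity in $d$ plays no role for this proposition.

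The point to tighten is the a priori finiteness. The absorption step needs the supremum with your fixed exponent $\lambda=\N/r$ to be finite. Polynomial growth of $\psi_i(\sqrt L)f$ of some order $m_f$ gives this only when $m_f\le\lambda$, and adjusting $\lambda$ or $r$ to $f$ would make the constants depend on $f$. The standard repair has two steps.
\begin{enumerate}
\item Run the argument with the $G$-invariant damping factor $(1+\epsilon\|\y\|)^{-m_f}$ inserted in the supremum; the constants may depend on $m_f$ but not on $\epsilon$.
\item Let $\epsilon\to 0$ to conclude that $\psi^*_{i,\lambda}(\sqrt L)f(\x)<\infty$ wherever $\mathcal M^{\mathcal O}_r(\psi_i(\sqrt L)f)(\x)<\infty$. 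Then rerun the undamped argument to obtain constants independent of $f$; where the maximal function is infinite there is nothing to prove.
\end{enumerate}
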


For $\lambda>0$, $j\in\mathbb{Z}$, and $\varphi\in\mathscr{S}(\mathbb{R})$, the Peetre-type maximal functions are defined by
\begin{align}
	\label{eq1-PetreeFunction}
	\varphi_{j,\lambda}^*(\sqrt{L})f(\x)
	&=\sup_{\y\in \mathbb R^N}\frac{|\varphi_j(\sqrt{L})f(\y)|}{(1+2^j d(\x,\y))^{\lambda}},\\
	\label{eq2-PetreeFunction}
	\varphi_{\lambda}^*(s\sqrt{L})f(\x)
	&=\sup_{\y\in \mathbb R^N}\frac{|\varphi(s\sqrt{L})f(\y)|}{(1+d(\x,\y)/s)^{\lambda}}.
\end{align}
The next proposition gives equivalent characterizations of the Besov and Triebel--Lizorkin spaces introduced above in terms of the Peetre-type maximal functions.

\begin{prop}[\cite{BBD}]\label{prop2-thm1}
	Let $\psi$ be a partition of unity. Then:
	\begin{enumerate}[\rm(a)]
		\item For $0<p,q\le\infty$, $s\in\mathbb{R}$ and $\lambda>\N/p$,
		\[
		\Big(\sum_{j\in\mathbb{Z}}(2^{js}\|\psi^*_{j,\lambda}(\sqrt{L})f\|_{L^p(dw)})^q\Big)^{1/q}
		\simeq
		\|f\|_{\dot{B}^{s,L}_{p,q}(dw)}.
		\]
		\item For $0<p<\infty$, $0<q\le\infty$, $s\in\mathbb{R}$ and $\lambda>\max\{\N/q, \N/p\}$,
		\[
		\Big\|\Big(\sum_{j\in\mathbb{Z}}(2^{js}|\psi^*_{j,\lambda}(\sqrt{L})f|)^q\Big)^{1/q}\Big\|_{L^p(dw)}
		\simeq
		\|f\|_{\dot{F}^{s,L}_{p,q}(dw)}.
		\]
	\end{enumerate}
\end{prop}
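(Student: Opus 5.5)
The statement to prove is Proposition \ref{prop2-thm1}, the Peetre maximal characterization. The plan has two directions.

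\textbf{Easy direction.} Since $|\psi_j(\sqrt L)f(\x)|\le \psi^*_{j,\lambda}(\sqrt L)f(\x)$ pointwise (take $\y=\x$), the left-hand sides dominate $\|f\|_{\dot B^{s,L}_{p,q}(dw)}$ and $\|f\|_{\dot F^{s,L}_{p,q}(dw)}$.

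\textbf{Hard direction: a reproducing formula.} Fix an even $\phi\in\mathscr S(\mathbb R)$ with $\supp\phi\subset[1/4,4]$ and $\phi\equiv1$ on $[1/2,2]$. Then
\[
\psi_j(\sqrt L)=\sum_{|i-j|\le 1}\psi_j(\sqrt L)\,\phi(2^{-i}\sqrt L)\,\psi_i(\sqrt L),
\]
using $\sum_i\psi_i=1$ on $(0,\infty)$, convergence in $\mathcal S'_\infty$ being standard. For the middle factor $K_j:=\psi_j(\sqrt L)\phi(2^{-j}\sqrt L)$, the kernel satisfies
\[
|K_j(\y,\z)|\lesssim \frac{1}{w(B(\y,2^{-j}))}\big(1+2^jd(\y,\z)\big)^{-M},
\]
by Lemma \ref{lem-heat kernel estimate}(a), applied with $\varphi$ the even extension of $\lambda\mapsto\psi(\lambda)\phi(\lambda)$.

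\textbf{Pointwise Peetre estimate.} I would prove that for $0<r\le1$ with $\lambda>\N/r$,
\[
\psi^*_{j,\lambda}(\sqrt L)f(\x)\lesssim \sum_{|i-j|\le1}\big[\mathcal M^{\mathcal O}_r(|\psi_i(\sqrt L)f|)(\x)\big].
\]
This is the Str\"omberg--Torchinsky/Rychkov argument, done in the following order.

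\emph{Local mean-value bound.} First I would derive, for $d(\y,\z)\lesssim 2^{-j}$,
\[
|\psi_j(\sqrt L)f(\y)|^r\lesssim \frac{1}{w(B(\y,2^{-j}))}\int_{B^d(\y,\delta2^{-j})}|\psi_j(\sqrt L)f|^r\,dw+\delta^{r}\,\big(\psi^*_{j,\lambda}(\sqrt L)f(\y)\big)^r(\cdots).
\]
The point is that $\psi_j(\sqrt L)f=\tilde\phi(2^{-j}\sqrt L)\psi_j(\sqrt L)f$, and the Lipschitz bound of Lemma \ref{lem-heat kernel estimate}(b) controls its oscillation on Euclidean balls of radius $\delta2^{-j}$.

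\emph{Main obstacle.} Lemma \ref{lem-heat kernel estimate}(b) gives regularity only in the Euclidean variable with decay in $d$, so the oscillation must be taken over Euclidean balls $B(\z,\delta2^{-j})\subset B^d(\z,\delta2^{-j})$. Using the doubling property and \eqref{eq-ratios on volumes of balls}, averages over these balls are still dominated by $\mathcal M^{\mathcal O}_r$, since orbits only enlarge the ball by a factor $|G|$.

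\emph{Absorbing the error term.} Dividing by $(1+2^jd(\x,\y))^{\lambda r}$ and taking the supremum in $\y$, choose $\delta$ small to absorb the $\delta$-term. This needs a priori finiteness of $\psi^*_{j,\lambda}f(\x)$, which holds for $f\in\mathcal S'_\infty$ at some large $\lambda$ by the kernel bounds. One then lowers $\lambda$ by the standard iteration. Lemma \ref{lem-elementary} turns the weighted integral into $\mathcal M^{\mathcal O}$ of $|\psi_j f|^r$, at the cost of $\lambda r>\N$.

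\textbf{Conclusion.}
\begin{enumerate}[(a)]
\item For the Besov case, choose $r<p$ with $\lambda>\N/r$ and apply \eqref{boundedness maximal function} in $L^{p/r}$.
\item For the Triebel--Lizorkin case, choose $r<\min\{p,q\}$ with $\lambda>\N/r$, which is possible since $\lambda>\max\{\N/p,\N/q\}$. Then apply \eqref{FSIn}; the finite shift $|i-j|\le1$ is harmless by \eqref{YFSIn}.
\end{enumerate}
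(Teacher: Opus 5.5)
Your argument is correct in substance, but it does not follow the route behind the statement. The paper gives no proof here: it imports Proposition~\ref{prop2-thm1} from \cite{BBD}, whose framework uses only the Gaussian upper bound \eqref{eq-Gaussian upper} and no smoothness of kernels. An argument at that level of generality cannot use a mean-value step; one typically works with reproducing formulas and a Rychkov-type absorption, splitting $|\psi_k(\sqrt L)f|=|\psi_k(\sqrt L)f|^r|\psi_k(\sqrt L)f|^{1-r}$ and absorbing the Peetre function at the level of norms. You instead use the Dunkl-specific Euclidean Lipschitz bound of Lemma~\ref{lem-heat kernel estimate}(b)---the same ingredient the paper uses to prove \eqref{eq:4.3}---to run the classical Peetre--Triebel mean-value argument. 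This gives a self-contained, single-scale pointwise inequality $\psi^*_{j,\lambda}(\sqrt L)f\lesssim \mathcal M^{\mathcal O}_r\big(\psi_j(\sqrt L)f\big)$. It also makes explicit how regularity in $\|\cdot\|$ combines with decay in the orbit distance $d$ (through $w(\mathcal O(B))\le |G|\,w(B)$), which the appeal to \cite{BBD} leaves implicit, since $d$ is only a distance between orbits. The price is that your proof rests on a regularity property the general theory does not need.

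Several points need tightening.

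(i) Drop the restriction $0<r\le 1$. When $\lambda\le\N$, which the statement allows in (a) as soon as $p>1$ and in (b) as soon as $\min\{p,q\}>1$, every admissible $r$ satisfies $r>\N/\lambda\ge 1$. The mean-value step works for all $r>0$ via $(a+b)^r\le 2^r(a^r+b^r)$. For $r\ge 1$, H\"older's inequality applied to $\psi_j(\sqrt L)f=\widetilde\phi(2^{-j}\sqrt L)\psi_j(\sqrt L)f$ already suffices.

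(ii) There is no iteration that lowers $\lambda$. One option is to run the argument with the weight $(1+2^jd(\x,\y))^{\lambda}(1+\epsilon 2^jd(\x,\y))^{\Lambda}$. The resulting supremum is finite by the polynomial growth of $\psi_j(\sqrt L)f$, the constants are uniform in $\epsilon$, and you then let $\epsilon\to0$. The other option is to note that the large-$\Lambda$ case (with $r=p$) yields $\|\psi_j(\sqrt L)f\|_{L^\infty}\lesssim 2^{j\N/p}\|\psi_j(\sqrt L)f\|_{L^p(dw)}$. Hence $\psi^*_{j,\lambda}(\sqrt L)f<\infty$ for every $\lambda\ge 0$ whenever the right-hand side of the equivalence is finite.

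(iii) To reach the sharp thresholds, bound the local averages directly. If $\|\x-\sigma\y\|=d(\x,\y)$, the ball $B(\y,d(\x,\y)+\delta 2^{-j})$ contains $B(\y,\delta 2^{-j})$ and has $\x$ in its orbit. This costs only $\delta^{-\N}(1+2^jd(\x,\y))^{\N}$, so $\lambda r\ge\N$ suffices. Routing the step through Lemma~\ref{lem-elementary} as stated, normalised by $w(B(\x,s))$, loses a further factor $(1+2^jd)^{\N}$ and gives only $\lambda r>2\N$.

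(iv) Lemma~\ref{lem-heat kernel estimate}(b) is stated in the second variable, so you need the symmetry of the real kernel of $\widetilde\phi(2^{-j}\sqrt L)$ to apply it in the first. Your three-term reproducing formula is not needed; the identity $\psi_j=\widetilde\phi_j\psi_j$ is all the argument uses.
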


For $0<p\le \infty$, we define the Hardy space  $H^p_L(dw)$ as the set of all functions $f\in L^2(dw)$ such that 
\[
\|f\|_{H^p_L(dw)}:=\Big\|\sup_{t>0}|e^{-tL}f|\Big\|_{L^p(dw)}<\vc.
\]
 It was proved in \cite{BBD} that $H^p_L(dw)=\dot{F}^{0,L}_{p,2}(dw)$ for all $0<p<\infty$, and that $H^p_L(dw)=L^p(dw)$ for $1<p<\infty$. For the Hardy spaces as well, we provide in the next proposition an equivalent characterization in terms of Peetre-type maximal functions.

\begin{lem}\label{lem-Hardy}
	Let $\varphi\in \mathscr{S}(\mathbb{R})$ be even and satisfy $\varphi(0)\ne 0$.  
	Then for $0<p\le\infty$ and $\lambda>2\mathfrak{N}/\min\{p,1\}$,
	\[
	\Big\|\sup_{s>0} |\varphi^*_\lambda(s\sqrt L)f|\Big\|_{L^p(dw)}
	\lesssim \|f\|_{H^p_{L}(dw)}.
	\]
	When $p=\infty$, the Hardy space $H^p_{L}(dw)$ is replaced by $L^\infty(dw)$.
\end{lem}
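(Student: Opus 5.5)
The plan is to reduce the Peetre-type maximal function to a nontangential maximal function, and then to control that by the heat maximal function which defines $H^p_L(dw)$.

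\textbf{The case $p=\infty$.} This case is immediate. By Lemma \ref{lem-heat kernel estimate}(a) with $\alpha=\beta=0$ and $M>\mathfrak N$, together with Lemma \ref{lem-elementary}, we get $|\varphi(s\sqrt L)f(\y)|\lesssim \mathcal M^{\mathcal O}f(\y)\le \|f\|_{L^\infty}$ uniformly in $s$ and $\y$. Hence $\varphi^*_\lambda(s\sqrt L)f\lesssim\|f\|_{L^\infty}$, since the weight $(1+d(\x,\y)/s)^{-\lambda}$ is at most $1$.

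\textbf{The case $0<p<\infty$: annular decomposition.} For $\beta\ge1$ I introduce the nontangential maximal function
\[
\mathcal N_\beta f(\x)=\sup_{s>0}\sup_{d(\x,\y)<\beta s}|\varphi(s\sqrt L)f(\y)|.
\]
Splitting the supremum defining $\varphi^*_\lambda(s\sqrt L)f(\x)$ into the regions $d(\x,\y)<s$ and $2^{k-1}s\le d(\x,\y)<2^k s$ for $k\ge1$ gives the pointwise bound
\[
\sup_{s>0}\varphi^*_\lambda(s\sqrt L)f(\x)\lesssim\sum_{k\ge0}2^{-k\lambda}\mathcal N_{2^k}f(\x).
\]

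\textbf{Change of aperture.} Next I use the standard change-of-aperture estimate on the doubling space $(\RN,d,dw)$:
\[
\|\mathcal N_{\beta}f\|_{L^p(dw)}\lesssim \beta^{\mathfrak N/p}\|\mathcal N_1 f\|_{L^p(dw)}.
\]
It is proved by comparing the level sets $\{\mathcal N_\beta f>t\}$ with $\{\mathcal M^{\mathcal O}\chi_{\{\mathcal N_1 f>t\}}\gtrsim\beta^{-\mathfrak N}\}$, using \eqref{eq-ratios on volumes of balls} and the weak-type $(1,1)$ bound for $\mathcal M^{\mathcal O}$. Taking the $\min\{p,1\}$-th power and summing in $k$ converges because $\lambda>2\mathfrak N/\min\{p,1\}>\mathfrak N/p$. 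This reduces the lemma to
\[
\|\mathcal N_1 f\|_{L^p(dw)}\lesssim\|f\|_{H^p_L(dw)}.
\]

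\textbf{Main obstacle: $\mathcal N_1$ versus the heat maximal function.} The last step is to compare $\mathcal N_1$, built from $\varphi(s\sqrt L)$, with the radial heat maximal function $\sup_t|e^{-tL}f|$. I would first try to invoke the maximal-function characterizations of $H^p_L$ from \cite{BBD} and \cite{HMY}. These are valid for nonnegative self-adjoint operators with Gaussian upper bounds such as \eqref{eq-Gaussian upper}, and they show that the radial heat maximal function, the nontangential one, and those built from any even $\varphi\in\mathscr S(\mathbb R)$ with $\varphi(0)\neq0$ give equivalent quasi-norms.

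If a direct argument is needed instead, I would write $f\in H^p_L=\dot F^{0,L}_{p,2}$ via atoms (or the $\psi_j$-decomposition) and estimate $\varphi(s\sqrt L)$ applied to each molecule. Here Lemma \ref{lem-heat kernel estimate}, with the extra decay factor $(1+\|\x-\y\|/t)^{-2}$, supplies the decay and smoothness needed to bound $\mathcal N_1 a$ uniformly in $L^p$ for each atom $a$. Summing over atoms with the $\min\{p,1\}$-triangle inequality would then conclude the proof. The exponent $2\mathfrak N/\min\{p,1\}$ leaves room for the loss incurred when passing between orbit distance $d$ and Euclidean distance in the kernel bounds.
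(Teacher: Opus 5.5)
Your argument is correct, and it is more detailed than the paper's, which gives no proof of its own. The paper calls the case $1<p\le\infty$ standard and quotes the case $0<p\le1$ from \cite[Theorem~3.7]{DKKP}.

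The new ingredient in your route is the Fefferman--Stein reduction from the Peetre-type maximal function to the aperture-one nontangential one. The annular splitting and the change of aperture (level sets, weak $(1,1)$ for $\mathcal M^{\mathcal O}$, and the upper bound in \eqref{eq-ratios on volumes of balls}) are valid on $(\RN,d,dw)$. This is because $d$ satisfies the triangle inequality and $w(\mathcal O(B))\simeq w(B)$. These steps only need $\lambda>\mathfrak N/p$, which is weaker than the stated hypothesis.

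The remaining estimate $\|\mathcal N_1 f\|_{L^p(dw)}\lesssim\|f\|_{H^p_L(dw)}$ is still imported, as in the paper. For $1<p<\infty$ it is elementary: Lemma~\ref{lem-heat kernel estimate}(a) gives $\mathcal N_1 f\lesssim\mathcal M^{\mathcal O}f$, and $H^p_L=L^p$.

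For $p\le1$, \cite{HMY} is not the right reference, since it concerns the classical spaces and not maximal functions built from $\varphi(s\sqrt L)$. Your atomic fallback does work, and more simply than you suggest. Given the atomic decomposition of $H^p_L$, take an atom $a=L^Mb$ adapted to $\mathcal O(B)$ and write $\varphi(s\sqrt L)a=s^{-2M}[(s^2L)^M\varphi(s\sqrt L)]b$. The size bound of Lemma~\ref{lem-heat kernel estimate}(a) for this kernel, with decay exponent at least $2M$, gives $\mathcal N_1a(\x)\lesssim (r_B/d(\x,\x_B))^{2M}w(B)^{1-1/p}/w(B(\x_B,d(\x,\x_B)))$ away from $\mathcal O(4B)$.

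This uses neither part (b), nor the extra factor $(1+\|\x-\y\|/t)^{-2}$, nor any comparison of $d$ with the Euclidean distance. So your closing explanation of the exponent $2\mathfrak N/\min\{p,1\}$ is beside the point.

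What your approach buys is a sharper range of $\lambda$. It also isolates the single operator-dependent input, and that input needs only kernel size bounds and cancellation, not kernel regularity.
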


\begin{proof}
	The proof for $1<p\le\infty$ is standard and omitted.  
	The case $0<p\le 1$ can be found in \cite[Theorem~3.7]{DKKP}.
\end{proof}

\section{Proof of the fractional Leibniz rules}\label{sec: proof of main thm}

Before proving Theorem \ref{main thm}, we establish two technical lemmas that will be needed in the proof.

\begin{lem}\label{lem1}
Let $A>0$ and let $\psi, \phi$ and $\varphi$ be smooth functions such that 
$$
\supp\psi \subset [A/2,2A], \quad\supp\phi\subset [0,2A]
$$ 
and
$$
\supp\varphi \subset [0,5A], \quad\varphi=1 \ \ \text{on} \ \ [0,4A].
$$ 
Then we have
\[
\varphi(\sqrt L)\big[\psi(\sqrt L)f\phi(\sqrt L)g\big]=\psi(\sqrt L)f\phi(\sqrt L)g.
\]
\end{lem}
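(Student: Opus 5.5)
The idea is to move everything to the Dunkl transform side. Write $u=\psi(\sqrt L)f$ and $v=\phi(\sqrt L)g$. By the spectral theorem and the fact that $\mathcal F$ diagonalizes $L$ (with $\mathcal F(Lh)(\xi)=\|\xi\|^2\mathcal F h(\xi)$), every bounded Borel function of $\sqrt L$ acts as a Dunkl multiplier:
\[
\mathcal F\big(F(\sqrt L)h\big)(\xi)=F(\|\xi\|)\,\mathcal F h(\xi).
\]
Hence $\mathcal F u$ and $\mathcal F v$ are supported in $\{\|\xi\|\le 2A\}$. Since $\varphi\equiv 1$ on $[0,4A]$, the claim reduces to showing that
\[
\supp \mathcal F(uv)\subset\{\|\xi\|\le 4A\}.
\]
Once this holds, $\varphi(\|\xi\|)\mathcal F(uv)(\xi)=\mathcal F(uv)(\xi)$, and inverting gives $\varphi(\sqrt L)(uv)=uv$.

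I will first treat $f,g\in L^2(dw)$, and if needed $f,g$ in the test class. Then $u,v\in L^2(dw)$ have compactly supported Dunkl transforms. By the inversion formula and the boundedness of $E(i\xi,\cdot)$ on real arguments, $u$ and $v$ are bounded and smooth, and $uv\in L^1(dw)\cap L^2(dw)$. For general $f,g$ in the spaces of the main theorem, I would pass to the limit through the convergence of $\psi(\sqrt L)$ and $\phi(\sqrt L)$ applied to approximating sequences, testing against $\mathcal S_\infty$. The identity is linear in $u$ and continuous in the relevant topologies, so this step should be routine.

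The key step is a Dunkl analogue of ``the transform of a product is the convolution of the transforms.'' I would expand $v=\mathcal F^{-1}\mathcal F v$ inside the integral defining $\mathcal F(uv)(\xi)$ and exchange the order of integration; Fubini is justified by the compact support of $\mathcal F v$ and $u\in L^1\cap L^2$. This gives
\[
\mathcal F(uv)(\xi)=c_k^{-1}\int_{\RN}\mathcal F v(\eta)\Big(c_k^{-1}\int_{\RN}E(-i\xi,\x)E(i\eta,\x)u(\x)\,dw(\x)\Big)dw(\eta).
\]
The inner integral is, by the definition \eqref{eq-translation} of Dunkl translation applied on the frequency side (using the symmetry of the roles of $\x$ and $\xi$ in $E$), equal to a Dunkl translate of $\mathcal F u$ evaluated at $\xi$, of the form $\bm{\tau}_{-\eta}(\mathcal F u)(\xi)$ up to a reflection or sign convention. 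This translate equals $\langle \mathcal F u,\gamma_{\xi,-\eta}\rangle$ in the sense of \cite{Trim}.

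By \eqref{eq-support of gamma}, $\gamma_{\xi,-\eta}$ is supported in $\{\z: \|\z\|\ge \big|\|\xi\|-\|\eta\|\big|\}$. Only $\|\eta\|\le 2A$ contributes to the outer integral. If $\|\xi\|>4A$ and $\|\eta\|\le 2A$, the support of $\gamma_{\xi,-\eta}$ lies in $\{\|\z\|>2A\}$, which is disjoint from $\supp\mathcal F u$. Hence the pairing vanishes, and so $\mathcal F(uv)(\xi)=0$ for $\|\xi\|>4A$.

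I expect the main obstacle to be the rigorous identification of the inner integral with $\langle\mathcal F u,\gamma_{\xi,-\eta}\rangle$. The translation formula \eqref{eq-translation} and the distribution representation are stated for nice functions, while $\mathcal F u$ is only known to be compactly supported and in $L^2$. To get around this, I would first prove the identity for $f,g\in\mathscr S(\RN)$, where $\mathcal F u$ is a compactly supported element of $L^2$ that can be approximated by test functions. I would then extend to the general case by density, using that both sides of the final identity are continuous under $L^2$ limits of $f$ and $g$.
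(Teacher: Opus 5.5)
Your proposal is correct and follows the paper's proof. Both pass to the Dunkl transform side, write $\mathcal F(uv)$ as the Dunkl convolution of $\mathcal F u$ and $\mathcal F v$ via \eqref{eq-defn f star g}, use \eqref{eq-support of gamma} to get $\supp\mathcal F(uv)\subset\{\|\xi\|\le 4A\}$, and conclude from $\varphi\equiv1$ on $[0,4A]$. You go further than the paper by deriving the product--convolution identity through Fubini and by adding a Schwartz-first, then $L^2$-density reduction that the paper leaves implicit. Note that the Fubini step needs $u\in L^1(dw)$. This holds for $f\in\mathscr S(\RN)$, where $\mathcal F u\in C_c^\infty$ already and no approximation of $\mathcal F u$ is needed, but not for general $f\in L^2(dw)$. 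So the Schwartz-first ordering you end with is the right one.
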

\begin{proof}
 {By \cite{B}, we know that
$$ \varphi(\sqrt L)f= \mathcal F^{-1}(\varphi(\|\cdot\|)\mathcal F f). $$
}
	Then, we have
	\begin{equation}\label{eq1-support}
	\mathcal F\Big\{\varphi(\sqrt L)\big[\psi(\sqrt L)f\phi(\sqrt L)g\big]\Big\}(\xi)=\varphi(\|\xi\|)\mathcal F\big[\psi(\sqrt L)f\phi(\sqrt L)g\big](\xi).
	\end{equation}
	On the other hand,
	\[
	\psi(\sqrt L)f\phi(\sqrt L)g= \mathcal F^{-1}\big[\psi(\| {\cdot}\|)\mathcal F f\big]\mathcal F^{-1}\big[\phi(\| {\cdot}\|)\mathcal F g\big].
	\]
	It follows that
	\[
	\begin{aligned}
		\mathcal F\big[\psi(\sqrt L)f\phi(\sqrt L)g\big]&=\big[\psi(\|\cdot\|)\mathcal F f\big]\ast\big[\phi(\|\cdot\|)\mathcal F g\big].
	\end{aligned}
	\]
	From \eqref{eq-support of gamma}, \eqref{eq-defn f star g} and the support conditions of $\psi$ and $\phi$, we have 
	\[
	\supp \big[\psi(\|\cdot\|)\mathcal F f\big]\ast\big[\phi(\|\cdot\|)\mathcal F g\big] \subset [0,4A];
	\]
	and hence,
	\[
	\varphi(\|\xi\|)\mathcal F\big[\psi(\sqrt L)f\phi(\sqrt L)g\big](\xi)=\mathcal F\big[\psi(\sqrt L)f\phi(\sqrt L)g\big](\xi).
	\]
	This, together with \eqref{eq1-support}, implies
	\[
	\mathcal F\Big\{\varphi(\sqrt L)\big[\psi(\sqrt L)f\phi(\sqrt L)g\big]\Big\}(\xi)=\mathcal F\big[\psi(\sqrt L)f\phi(\sqrt L)g\big](\xi).
	\]
	Consequently,
	\[
 \varphi(\sqrt L)\big[\psi(\sqrt L)f\phi(\sqrt L)g\big] = \psi(\sqrt L)f\phi(\sqrt L)g.
	\]
	This completes our proof.
\end{proof}
\begin{lem}\label{lem-kernel-representation}  
For $\lambda, n>0$, we set 
\[D_{\lambda, n}(\x, \y)= \frac{1}{w(B(\x, \lambda))}\left(1+ \frac{d(\x,\y)}{\lambda}\right)^{-n}.\]
	 Then there exist an universal constant $k_0\in \mathbb Z^+$ and  a sequence of measurable functions $\{\widetilde\Gamma_j\}_{j\in \mathbb Z}$ defined on $\RN\times \RN$ satisfying the following properties:
	\begin{enumerate}[\rm (i)]
		\item For every  { $m\in \mathbb N$, there exists $C_{m}>0$} such that for all $\x,\y\in \RN$ and $j\in \mathbb Z$,
		\[
		|L^m\widetilde\Gamma_j(\x,\y)|\le C_{m}\,2^{2jm} D_{2^{-j},M}(\x,\y).
		\]
		
		\item For   $j\in \mathbb Z$ and $M>0$,
		\[
		|\widetilde\Gamma_j(\x,\y)-\widetilde\Gamma_j(\overline \x,\y)|
		\le  C  (2^jd(\x,\overline \x) )  D_{2^{-j},M}(\x,\y),
		\quad \text{whenever } d(\x,\overline \x)<2^{-j}.
		\] 
		\item For $\psi \in \mathscr{S}([0,\infty))$ with $\supp \psi \subset [0,2]$ and any $j\in \mathbb Z$, 
		\[
		\psi_j(\sqrt L)f(\x) 
		= \sum_{Q\in \mathscr D_{j+k_0}}\omega_Q \,\psi_j(\sqrt L)f(\x_Q)\,\widetilde\Gamma_j(\x,\x_Q) 
		\]
		for $f\in \mathcal S_{\vc}, $ where $\omega_Q\simeq w(Q)$ for each $Q\in \mathscr D_j$ and each $j\in \mathbb N$.
	\end{enumerate}
\end{lem}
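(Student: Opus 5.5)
This is a Calderón-type reproducing formula with a discretization, i.e. a frame decomposition for band-limited functions $\psi_j(\sqrt L)f$. I plan to build $\widetilde\Gamma_j$ from a band-limited kernel and a partition of unity subordinate to the dyadic cubes, and to use the Dunkl transform to turn band-limitedness into an exact reproducing identity.

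\textbf{Construction.} Fix an even $\theta\in\mathscr S(\mathbb R)$ with $\theta=1$ on $[0,2]$ and $\supp\theta\subset[0,4]$, and set $\theta_j(\lambda)=\theta(2^{-j}\lambda)$. Since $\supp\psi_j\subset[0,2^{j+1}]$, spectral calculus (or the identity $\theta_j(\sqrt L)=\mathcal F^{-1}\theta_j(\|\cdot\|)\mathcal F$ used in Lemma \ref{lem1}) gives $\psi_j(\sqrt L)f=\theta_j(\sqrt L)\psi_j(\sqrt L)f$. With $F=\psi_j(\sqrt L)f$ this reads
\[
F(\x)=\int\theta_j(\sqrt L)(\x,\y)F(\y)\,dw(\y)=\sum_{Q\in\mathscr D_{j+k_0}}\int_Q\theta_j(\sqrt L)(\x,\y)F(\y)\,dw(\y).
\]
On each $Q$ I replace $F(\y)$ by $F(\x_Q)$ and put $\omega_Q=w(Q)$ and $\widetilde\Gamma_j(\x,\x_Q)=w(Q)^{-1}\int_Q\theta_j(\sqrt L)(\x,\y)\,dw(\y)$. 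This does not give an exact identity, because the error term $\sum_Q\int_Q\theta_j(\sqrt L)(\x,\y)[F(\y)-F(\x_Q)]\,dw(\y)$ remains. To remove it I follow the Frazier--Jawerth / Gilbert--Han--Hogan--Lakey--Weiland--Weiss device. Define $T_jF(\x)=\sum_Q w(Q)F(\x_Q)\,\widetilde\Gamma^{0}_j(\x,\x_Q)$, with $\widetilde\Gamma^0_j$ as above, on the space $V_j$ of functions whose Dunkl transform is supported in $\{\|\xi\|\le 2^{j+1}\}$. Then I show that $\|(I-T_j)F\|\le C2^{-k_0}\|F\|$ in a suitable norm, take $k_0$ large, and invert by a Neumann series: $F=\sum_{m\ge0}(I-T_j)^mT_jF$.

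\textbf{Key steps, in order.} (1) Kernel bounds for $\theta_j(\sqrt L)(\x,\y)$. Lemma \ref{lem-heat kernel estimate}(a) with $t=2^{-j}$ gives decay $D_{2^{-j},M}$ for any $M$ and derivative bounds $2^{j|\alpha|}$. Composing with $L^m$, which multiplies the symbol by $\lambda^{2m}$ and leaves an even Schwartz function, gives the bound $2^{2jm}D_{2^{-j},M}$. (2) Lemma \ref{lem-heat kernel estimate}(b) controls $|F(\y)-F(\x_Q)|$ for $\y\in Q$ through the Euclidean gradient of the band-limited $F=\theta_j(\sqrt L)F$, which is at most $2^{-k_0}\sup_{\|\z-\y\|\le 2^{-j}}|F(\z)|\cdot$(a decaying factor). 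This produces the smallness $2^{-k_0}$. (3) The operator $I-T_j$ is bounded with norm $\lesssim2^{-k_0}$ on $L^\infty$ with weight, or more conveniently in the Peetre norm $\sup_\y|F(\y)|(1+2^jd(\x,\y))^{-\lambda}$. Summation over cubes is handled by the doubling estimate \eqref{eq-ratios on volumes of balls} and the sum-to-integral comparison underlying Lemma \ref{lem1- thm2 atom Besov}. (4) Since $T_j$ does not map $V_j$ to itself, I insert $\theta_j(\sqrt L)$ on the left, i.e. I work with $\theta_j(\sqrt L)T_j$. The final kernel $\widetilde\Gamma_j(\x,\x_Q)$ is then $\sum_m[\theta_j(\sqrt L)(I-T_j)]^m$ applied to $\widetilde\Gamma^0_j(\cdot,\x_Q)$, which lies in the range of $\theta_j(\sqrt L)$. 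Hence (i) and (ii) are inherited from the bounds for $\theta_j(\sqrt L)$: the series converges geometrically in the weighted sup-norms with weight $D_{2^{-j},M}(\cdot,\x_Q)$, and it converges geometrically for $L^m$ as well, since $L^m\theta_j(\sqrt L)$ has the same type of kernel.

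\textbf{Main obstacle.} The hardest part is step (3)/(4): the composition estimate $\int D_{2^{-j},M}(\x,\z)D_{2^{-j},M}(\z,\y)\,dw(\z)\lesssim D_{2^{-j},M'}(\x,\y)$ in the orbit distance $d$, and its discrete analogue over cubes of $\mathscr D_{j+k_0}$. The difficulty is that Lemma \ref{lem-heat kernel estimate}(b) is stated with Euclidean $\|\y-\y'\|$, while (ii) is stated with $d(\x,\overline\x)$. I would first reduce to the case $\|\x-\overline\x\|=d(\x,\overline\x)$ by replacing $\overline\x$ with $\sigma(\overline\x)$. This reduction needs $G$-invariance of $\widetilde\Gamma_j(\cdot,\y)$, which fails in general. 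So the fallback is to prove (ii) only with Euclidean distance, and then observe that if $d(\x,\overline\x)<2^{-j}$ but $\|\x-\overline\x\|$ is large, both terms are individually bounded by $D_{2^{-j},M}(\x,\y)$ up to constants, because $D$ depends only on $d$ and volume doubling. The factor $2^jd(\x,\overline\x)$ is the delicate point there, and I expect this step to need the most care.
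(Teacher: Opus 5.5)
\textbf{Gap: the operator you iterate is not small on the space it preserves.} Your overall plan is the right one: make the sampling error small with Lemma \ref{lem-heat kernel estimate}(b), then invert on band-limited functions. Your step (2) is the pointwise form of the estimate \eqref{eq:4.3}, which the paper proves before deferring the construction to \cite[Lemma 4.2]{PK}. The problem is in the inversion you actually write down.

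The bound $\|(I-T_j)F\|\lesssim 2^{-k_0}\|F\|$ holds only for $F$ with $\theta_j(\sqrt L)F=F$, that is, $F\in V_j$. Neither $I-T_j$ nor $A:=\theta_j(\sqrt L)(I-T_j)$ maps $V_j$ into itself. Write $\Pi_jG:=\sum_{Q}G(\x_Q)1_Q$, so that $T_j=\theta_j(\sqrt L)\Pi_j$. Then for every $G$,
\[
AG=\big[\theta_j(1-\theta_j)\big](\sqrt L)G+\theta_j(\sqrt L)\big[\theta_j(\sqrt L)(I-\Pi_j)G\big].
\]
$A$ does preserve $\operatorname{Ran}\theta_j(\sqrt L)$, but on that space the first term is of size $O(1)$ uniformly in $k_0$. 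To see this, take $G$ with spectrum in the part of $(2^{j+1},2^{j+2})$ where $0<\theta_j<1$. So the geometric convergence with ratio $C2^{-k_0}$ that you assert in the weighted sup-norms is not justified. Controlling powers of the $O(1)$ multiplier $\theta_j(1-\theta_j)(\sqrt L)$ in weighted norms would need a separate argument. The identity is also off: for $F\in V_j$ one gets $(I-A)F=\theta_j(\sqrt L)T_jF$, not $T_jF$, so $\sum_m A^mT_jF\neq F$ in general.

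\textbf{How to repair it.} The missing observation is that your sampling estimate does not need $\theta_j(\sqrt L)G=G$. It holds for every $G$ with spectrum in $[0,2^{j+2}]$, because such $G$ is reproduced by $\eta_j(\sqrt L)$ with $\eta=1$ on $[0,4]$. So the operator to iterate is
\[
R_j:=\theta_j(\sqrt L)-T_j=\theta_j(\sqrt L)(I-\Pi_j).
\]
This $R_j$ maps $\operatorname{Ran}\theta_j(\sqrt L)$ into itself with norm $\lesssim 2^{-k_0}$ in the norms $\sup_\x|G(\x)|/D_{2^{-j},M}(\x,\x_Q)$. To keep the exponent $M$ under composition, use kernel decay $M'\gg M$, which Lemma \ref{lem-heat kernel estimate} allows. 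For $F\in V_j$ one has $F=\theta_j(\sqrt L)F=T_jF+R_jF$, hence $F=\sum_{m\ge 0}R_j^mT_jF$. Setting $\widetilde\Gamma_j(\cdot,\x_Q):=\sum_m R_j^m\widetilde\Gamma^0_j(\cdot,\x_Q)$ then gives (iii) with $\omega_Q=w(Q)$. Property (i) follows because every term lies in $\operatorname{Ran}\theta_j(\sqrt L)$, where $L^m=2^{2jm}\big[(2^{-j}\sqrt L)^{2m}\eta_j(\sqrt L)\big]$. With this correction your sup-norm route gives the localization of $\widetilde\Gamma_j$ directly, which the paper's $L^p$ estimate leaves to \cite{PK}.

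\textbf{On (ii).} Drop your fallback: with the orbit distance the claim is false, not merely delicate. For $\overline\x=\sigma(\x)\neq\x$ the right-hand side vanishes. That would make each $\widetilde\Gamma_j(\cdot,\y)$ $G$-invariant, and then, by (iii), every $\psi_j(\sqrt L)f$ would be $G$-invariant, which is absurd. Property (ii) must be read with $\|\x-\overline\x\|$, which is exactly what Lemma \ref{lem-heat kernel estimate}(b) gives. It is not used in the proof of Theorem \ref{main thm}.
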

\begin{proof}
	The proof of this lemma is similar to that of \cite[Lemma 4.2]{PK}. 
	It suffices to prove the following estimate: let $\Gamma \in \mathscr{S}(\mathbb{R})$ be even and supported in $[-3,3]$, and let $j_0 \in \mathbb{Z}^+$ be a fixed positive integer. 
	For $j \in \mathbb{Z}$ and $1 \leq p < \infty$, we have
	\begin{equation} \label{eq:4.3}
		\sum_{Q \in \mathscr{D}_{j+j_0}} \int_{Q} 
		|\Gamma_j(\sqrt{L})f(\x) - \Gamma_j(\sqrt{L})f(\x_Q)|^p \, dw(\x) 
		\leq C 2^{-j_0 p}\|\Gamma_j(\sqrt{L})f\|_{L^p(dw)}^p.
	\end{equation}
Note that the estimate \eqref{eq:4.3} does not follow from \cite{PK}, since in our setting we do not have the H\"older continuity of the heat kernel of $L$. Once the estimate \eqref{eq:4.3} has been proved, we can follow the proof of \cite[Lemma 4.2]{PK} to obtain the lemma. 
	
	To prove \eqref{eq:4.3}, let $\varphi \in \mathscr{S}(\mathbb{R})$ be even such that 
	$\supp \varphi \subset [-4,4]$ and $\varphi = 1$ on $[-3,3]$. 
	Then we have
	\[
	\Gamma_j(\sqrt{L})f(\x) 
	= \int_{\mathbb{R}^N} \varphi_j(\sqrt{L})(x, y)[\Gamma_j(\sqrt{L})f](\y)\, dw(\y).
	\]
	Using Lemmas \ref{lem-heat kernel estimate} and \ref{lem-elementary}, we obtain
	\[
	\begin{aligned}
		\sum_{Q \in \mathscr{D}_{j+j_0}} \int_{Q} 
		&|\Gamma_j(\sqrt{L})f(\x) - \Gamma_j(\sqrt{L})f(\x_Q)|^p \, dw(\x) \\
		&= \sum_{Q \in \mathscr{D}_{j+j_0}} \int_{Q} 
		\left| \int_{\mathbb{R}^N} 
		\left[\varphi_j(\sqrt{L})(x, y) - \varphi_j(\sqrt{L})(x_Q, y)\right] 
		[\Gamma_j(\sqrt{L})f](\y) \, dw(\y) \right|^p dw(\x) \\
		&\le C \sum_{Q \in \mathscr{D}_{j+j_0}} \int_{Q} 
		\left( \int_{\mathbb{R}^N}  
		\frac{d(\x,\x_Q)}{2^{-j}} 
		D_{2^{-j},\mathfrak{N}+1}(\x,\y)|\Gamma_j(\sqrt{L})f(\y)| \, dw(\y) \right)^p dw(\x) \\
		&\le C 2^{-j_0 p}\sum_{Q \in \mathscr{D}_{j+j_0}} 
		\int_{Q}(\mathcal{M}^{\mathcal{O}}|\Gamma_j(\sqrt{L})f|(\x))^p\, dw(\x)\\
        &
		= C 2^{-j_0 p}\|\mathcal{M}^{\mathcal{O}}|\Gamma_j(\sqrt{L})f|\|_{L^p(dw)}^p \\
		&\le C 2^{-j_0 p}\|\Gamma_j(\sqrt{L})f\|_{L^p(dw)}^p.
	\end{aligned}
	\]
	This completes the proof.	
\end{proof}

We are now ready to present the proof of Theorem \ref{main thm}.
	
\begin{proof}[Proof of Theorem \ref{main thm}:] We only give the proof of \eqref{eq-main thm TL spaces} since the proof of \eqref{eq-main thm B spaces} can be done similarly.
	
	\medskip
	
We now fix  $0 < p,p_1, p_4 < \infty$ and $0 <  p_2,p_3  \le \infty$ such that $1/p = 1/p_1 + 1/p_2=1/p_3 + 1/p_4$ and $0 < q \le \infty$.

 Fix \[
	s>\tau_{p,q} := \mathfrak{N} \left( \frac{1}{\min(p,q,1)} - 1 \right).\]
%Recall that $\tau_p(dw) := n \left( \frac{1}{\min(p/\tau_w,1)} - 1 \right).$

We also fix $0<r<\min\{1,p,q\}$ such that $s>\mathfrak N(1/r-1)$.

Let $\psi\in C^\infty(\mathbb{R})$ be  a partition of unity  such that $0\le\psi\le1$ and
\[
\supp\psi(\xi)\subset [1/2,2],\qquad \sum_{j\in\mathbb Z}\psi_j(\xi) = 1 \ \text{for} \ \xi>0,
\]
where $\psi_j(\cdot)=\psi(2^{-j}\cdot)$.

Set
\[
\phi(\xi) =\begin{cases}
	\sum\limits_{j\le 0}\psi_j(\xi), \ \ & \xi>0\\
	1, \ \ & \xi=0.
\end{cases}
\]
It is easy to see that $\phi\in \mathscr{S}([0,\infty))$ and $\supp \phi\subset [0,2]$; moreover, for any $k\in \mathbb Z$, 
\[
\phi_k(\xi):=\phi(2^{-k}\xi) = \sum_{j\le k}\psi_j(\xi). 
\]

We write
\begin{equation}\label{eq- - m decomposition}
\begin{aligned}
	fg&=\sum_{k\in \mathbb Z}\sum_{j\in \mathbb{Z}}  \psi_k(\sqrt L)f\psi_j(\sqrt L)g\\
	&=\sum_{k\in \mathbb Z}\sum_{j\in \mathbb Z\atop  j\le  k}  \psi_k(\sqrt L)f\psi_j(\sqrt L)g +\sum_{j\in \mathbb Z}\sum_{k\in \mathbb Z \atop k\le j-1}  \psi_k(\sqrt L)f\psi_j(\sqrt L)g\\
	&= \sum_{k\in \mathbb Z}   \psi_k(\sqrt L)f\phi_k(\sqrt L)g +\sum_{j\in \mathbb Z}   \phi_{j-1}(\sqrt L)f\psi_j(\sqrt L)g.
\end{aligned}
\end{equation}
These two terms are similar and hence it suffices to estimate the first term. To do this, we first observe that $\supp \psi_k\subset [2^{k-1}, 2^{k+1}]$ and $\supp \phi_k\subset [0, 2^{k+1}]$. This, together with the fact $\phi_{k+3} =1 $ on $[0,2^{k+2}]$ and Lemma \ref{lem1}, gives
\[
\psi_k(\sqrt L)f\phi_k(\sqrt L)g=\phi_{k+3}(\sqrt L)[\psi_k(\sqrt L)f\phi_k(\sqrt L)g].
\]

Hence, applying Lemma \ref{lem-kernel-representation},
\[
\begin{aligned}
	\sum_{k\in \mathbb Z}   \psi_k(\sqrt L)f\phi_k(\sqrt L)g(\x) &=\sum_{k\in \mathbb Z}\phi_{k+3}(\sqrt L)[\psi_k(\sqrt L)f\phi_k(\sqrt L)g]\\
	&= \sum_{k\in \mathbb Z} \ \sum_{Q\in \mathscr D_{k+k_0+3}} \omega_Q\phi_{k+3}(\sqrt L)[\psi_k(\sqrt L)f\phi_k(\sqrt L)g](\x_Q) \widetilde{\Gamma}_k(\x,\x_Q)\\
	&= \sum_{k\in \mathbb Z} \ \sum_{Q\in \mathscr D_{k+k_0+3}}  \omega_Q \psi_k(\sqrt L)f(\x_Q)\phi_k(\sqrt L)g(\x_Q) \widetilde{\Gamma}_k(\x,\x_Q).
	\end{aligned}
\]
Therefore,  {for any $\ell \in \mathbb Z$,}
\begin{equation}\label{eq- 1st eq BmL}
\begin{aligned}
\sum_{k\in \mathbb Z}&	\psi_\ell(\sqrt L) L^{s/2}[\psi_k(\sqrt L)f\phi_k(\sqrt L)g](\x)
	\\
	=&  \sum_{k\in \mathbb Z}    \sum_{Q\in \mathscr D_{k+k_0+3}}  2^{\ell s} \omega_Q \psi_{k}(\sqrt{L})f(\x_Q)   \phi_{k}(\sqrt{L})g(\x_Q)  [(2^{-\ell}\sqrt L)^s\psi_\ell(\sqrt L)]\widetilde{\Gamma}_k(\cdot,x_Q)(\x)\\
		=&  \sum_{k\in \mathbb Z}    \sum_{Q\in \mathscr D_{k+k_0+3}}  2^{(\ell-k) s} \omega_Q \widetilde{\psi}_{k}(\sqrt{L})(L^{s/2}f)(\x_Q)   \phi_{k}(\sqrt{L})g(\x_Q) \ [(2^{-\ell}\sqrt L)^s\psi_\ell(\sqrt L)]\widetilde{\Gamma}_k(\cdot,\x_Q)(\x)\\
	=&: \sum_{k \in \mathbb Z \atop k\le \ell} E_{\ell,k}+\sum_{k \in \mathbb Z \atop k>\ell} E_{\ell,k},
\end{aligned}
\end{equation}
where $\widetilde{\psi}_{k}(\sqrt L) = [2^{-k}\sqrt L]^{-s}\psi_{k}(\sqrt L)$.

\bigskip

We first estimate the term $\displaystyle \sum_{k \in \mathbb Z \atop k\le \ell} E_{\ell,k}$. By Lemmas \ref{lem-kernel-representation} and   \ref{lem-heat kernel estimate}, for $m>s/2$ and $M>n/r$,
\begin{equation}\label{eq- the use of A3}
\begin{aligned}
	\Big|[(2^{-\ell}\sqrt L)^{s }\psi_\ell(\sqrt L)]\widetilde{\Gamma}_k(\cdot,\x_Q)(\x)\Big|&= 2^{-2m\ell}\Big|[(2^{-\ell}\sqrt L)^{s-2m }\psi_\ell(\sqrt L)]L^{m}\widetilde{\Gamma}_k(\cdot,\x_Q)(\x)\Big|\\
	&=2^{-2m\ell}\Big|\int_{\RN} [(2^{-\ell}\sqrt L)^{s-2m }\psi_\ell(\sqrt L)](\x,\y)L^{m}\widetilde{\Gamma}_k(\y,\x_Q)dw(\y)\Big|\\
	&\lesi 2^{-2m\ell}  {2^{2mk}}\int_{\RN}D_{2^{-\ell},M+\N}(\x,\y)D_{2^{-k},M+\N}(\y,\x_Q)dw(\y).
\end{aligned}
\end{equation}
 {It is well known that for all $\lambda, \mu >0$,
\[\int_{\RN}D_{\lambda,M+\N}(\x,\y)D_{\mu,M+\N}(\y,\z)dw(\y)\lesi D_{\max\{\lambda , \mu\},M}(\x,\z) \quad \x,\y,\z \in \RN, \]}
Then,
\[
\int_{\RN}D_{2^{-\ell},M+\N}(\x,\y)D_{2^{-k},M+\N}(\y,\x_Q)dw(\y)\lesi D_{2^{-k},M}(\x,\x_Q),\]
and hence,
\[
	\Big|[(2^{-\ell}\sqrt L)^{s }\psi_\ell(\sqrt L)]\widetilde{\Gamma}_k(\cdot,\x_Q)(\x)\Big|\lesi 2^{-2m {(\ell-k)}}D_{2^{-k},M}(\x,\x_Q).
\]
This, together with Lemma \ref{lem1- thm2 atom Besov} and the fact $\omega_Q\simeq w(Q)$, implies
\[
\begin{aligned}
	|E_{\ell,k}(\x)|&\lesi     \sum_{Q\in \mathscr  D_{k+k_0+3}}2^{(\ell-k)(s-2m)} w(Q)  |\widetilde{\psi}_{k}(\sqrt{L})(L^{s/2}f)(\x_Q)| 	 |\phi_{k}(\sqrt{L})g(x_Q)|D_{2^{-k},M}(\x, \x_Q)\\
	& \lesi 2^{(\ell-k)(s-2m)} \mathcal M^{\mathcal O}_{r}\Big(\sum_{Q\in \mathscr D_{k+k_0+3}}|\widetilde{\psi}_{k}(\sqrt{L})(L^{s/2}f)(\x_Q)| 	 |\phi_{k}(\sqrt{L})g(x_Q)|1_{Q} \Big)(\x)  
\end{aligned}
\]
It can be verified that for $\lambda> 2\times\max\{\N/q, \N/p, \N\}$, 
\begin{equation}\label{eq - overlinePhi estimate}
\sum_{Q\in \mathscr D_{k+k_0+3}}|\widetilde{\psi}_{k}(\sqrt{L})(L^{s/2}f)(\x_Q)| 	 |\phi_{k}(\sqrt{L})g(x_Q)|1_{Q} \lesi \widetilde{\psi}_{k,\lambda}^*(\sqrt{L})(L^{s/2}f){\phi}_{k,\lambda}^*(\sqrt{L})g,
\end{equation}
where
\begin{equation*}
	\widetilde{\psi}_{k,\lambda}^*(\sqrt{L})f(\x)=\sup_{\y\in \RN}\f{|\widetilde{\psi}_{k}(\sqrt{L})f(\y)|}{(1+2^kd(\x,\y))^\lambda} \ \ \ \text{and} \ \ \ {\phi}^*_{k,\lambda}(\sqrt L)g(\x)=\sup_{\y\in \RN}\f{|{\phi}^*_{k}(\sqrt{L})g(\y)|}{(1+2^kd(\x,\y))^\lambda}. 
\end{equation*}
Hence, we further imply
\begin{equation*}
\begin{aligned}
	|E_{\ell,k}(\x)|&\lesi   2^{(k-\ell)(2m-s) }  \mathcal M_{r}^{\mathcal O}\Big( \widetilde{\psi}^*_{k,\lambda}(\sqrt{L})(L^{s/2}f){\phi}^*_{k,\lambda}(\sqrt{L})g\Big)(\x).
\end{aligned}
\end{equation*}
Consequently,
\begin{equation}\label{eq-k < ell}
\sum_{k \in \mathbb Z \atop k\le \ell} E_{\ell,k}\lesi \sum_{k \in \mathbb Z \atop k\le \ell}  2^{(k-\ell)(2m-s) }  \mathcal M_{r}^{\mathcal O}\Big( |\widetilde{\psi}^*_{k,\lambda}(\sqrt{L})(L^{s/2}f){\phi}^*_{k,\lambda}(\sqrt{L})g\Big).
\end{equation}
\bigskip

For the second term $\displaystyle \sum_{k \in \mathbb Z \atop k> \ell} E_{\ell,k}$. By Lemmas \ref{lem-kernel-representation} and   \ref{lem-heat kernel estimate}, for $k>\ell$,
\begin{equation}
	\begin{aligned}
		\Big|[(2^{-\ell}\sqrt L)^{s }\psi_\ell(\sqrt L)]\widetilde{\Gamma}_k(\cdot,\x_Q)(\x)\Big|
		&= \Big|\int_{\RN} [(2^{-\ell}\sqrt L)^{s }\psi_\ell(\sqrt L)](\x,\y)\widetilde{\Gamma}_k(\y,\x_Q)dw(\y)\Big|\\
		&\lesi  D_{2^{-\ell},M}(\x,\x_Q).
	\end{aligned}
\end{equation}
Hence,
\[
\begin{aligned}
	|E_{\ell,k}(\x)|&\lesi    \sum_{Q\in \mathscr D_{k+k_0+3}}  2^{(\ell-k)s} \omega_Q  |\widetilde{\psi}_{k}(\sqrt{L})(L^{s/2}f)(\x_Q)| |\phi_{k}(\sqrt{L})g(x_Q)|D_{2^{-\ell},M}(\x,\x_Q)\\
	&\lesi    2^{(\ell-k)s}2^{\mathfrak N(k-\ell)(1/r-1)}  \mathcal M_{r}^{\mathcal O}\Big[ \sum_{Q\in \mathscr D_{k+k_0+3}}|\widetilde{\psi}_{k}(\sqrt{L})(L^{s/2}f)(\x_Q)| 	 |\phi_{k}(\sqrt{L})g(x_Q)|1_{Q} \Big](\x),
\end{aligned}
\]
where in the last inequality we used Lemma \ref{lem1- thm2 atom Besov}.

Similarly to \eqref{eq-k < ell}, for $\lambda> 2\times\max\{\N/q, \N/p, n\} $, we have
\begin{equation}\label{eq-k > ell}
	\sum_{k \in \mathbb Z \atop k> \ell} E_{\ell,k}\lesi \sum_{k \in \mathbb Z \atop k> \ell}  2^{-(k-\ell)[s-\mathfrak N(1/r-1)] }  \mathcal M_{r}^{\mathcal O}\Big( \widetilde{\psi}^*_{k,\lambda}(\sqrt{L})(L^{s/2}f){\phi}^*_{k,\lambda}(\sqrt{L})g\Big).
\end{equation}

From \eqref{eq- 1st eq BmL}, \eqref{eq-k < ell} and \eqref{eq-k > ell}, 
\[
\begin{aligned}
	\Big|\psi_\ell(\sqrt L) L^{s/2}&\big[\sum_{k\in \mathbb Z}   \psi_k(\sqrt L)f\phi_k(\sqrt L)g\big](\x)\Big|\\
	 &\lesi  \sum_{k\in \mathbb Z\atop k<\ell}2^{k-\ell}  \mathcal M^{\mathcal O}_{r}\Big( \widetilde{\psi}^*_{k,\lambda}(\sqrt{L})(L^{s/2}f){\phi}^*_{k,\lambda}(\sqrt{L})g\Big)(\x)\\
	 &\  \ \ +  \sum_{k\in \mathbb Z\atop k\ge \ell} 2^{-(k-\ell)[s-\mathfrak N(1/r-1)]} \mathcal M^{\mathcal O}_{r}\Big( \widetilde{\psi}^*_{k,\lambda}(\sqrt{L})(L^{s/2}f){\phi}^*_{k,\lambda}(\sqrt{L})g\Big)(\x).
\end{aligned}
\]
Since $s>\mathfrak N(1/r-1)$, it follows that 
\[
\begin{aligned}
		\Big(\sum_{\ell\in\mathbb Z}|\psi_\ell(\sqrt L) L^{s/2}\big[\sum_{k\in \mathbb Z} &  \psi_k(\sqrt L)f\phi_k(\sqrt L)g\big](\x)|^q\Big)^{1/q}\\
		&\lesi  \Big(\sum_{k\in \mathbb Z}   \Big| \mathcal M^{\mathcal O}_{r}\Big( |\widetilde{\psi}^*_{k,\lambda}(\sqrt{L})(L^{s/2}f){\phi}^*_{k,\lambda}(\sqrt{L})g\Big)(\x) \Big|^q\Big)^{1/q}.
\end{aligned}
\]
By the maximal inequality \eqref{FSIn} and the H\"older inequality, we further imply
\[
\begin{aligned}
\Big\|	\Big(\sum_{\ell\in\mathbb Z}|\psi_\ell&(\sqrt L) L^{s/2}\big[\sum_{k\in \mathbb Z}   \psi_k(\sqrt L)f\phi_k(\sqrt L)g\big](\x)|^q\Big)^{1/q}\Big\|_{L^p(dw)}\\
	&\lesi  \Big\|\Big(\sum_{k\in \mathbb Z}   \Big[|\widetilde{\psi}^*_{k,\lambda}(\sqrt{L})(L^{s/2}f){\phi}^*_{k,\lambda}(\sqrt{L})g\Big]^q\Big)^{1/q}\Big\|_{L^p(dw)}\\
	&\lesi  \Big\|\Big(\sum_{k\in \mathbb Z}    \Big( |\widetilde{\psi}^*_{k,\lambda}(\sqrt{L})(L^{s/2}f)|^q\Big)^{1/q}\Big\|_{L^{p_1}(dw)}\Big\| \sup_{j\in \mathbb Z}{\phi}^*_{j,\lambda}(\sqrt{L})g\Big\|_{L^{p_2}(dw)}.
\end{aligned}
\]
At this stage, applying Propositions \ref{prop- Ls on TL B spaces}, \ref{prop2-thm1} and Lemma \ref{lem-Hardy} we arrive at
\[
\begin{aligned}
	 \Big\|\sum_{k\in \mathbb Z}   \psi_k(\sqrt L)f\phi_k(\sqrt L)g\Big\|_{\dot{F}^{s,L}_{p,q}(dw)}&\simeq \|L^{s/2}\big[\sum_{k\in \mathbb Z}   \psi_k(\sqrt L)f\phi_k(\sqrt L)g\big]\|_{\dot{F}^{0,L}_{p,q}(dw)}\\
	&\lesi \|L^{s/2}f\|_{\dot{F}^{0,L}_{p_1,q}(dw)}\|g\|_{H^{p_2}_L(dw)}\\
	&\simeq \|f\|_{\dot{F}^{s,L}_{p_1,q}(dw)}\|g\|_{H^{p_2}_L(dw)}.
\end{aligned}
\]
Similarly,
\[
\begin{aligned}
	\Big\|\sum_{k\in \mathbb Z}   \phi_{k-1}(\sqrt L)f\psi_k(\sqrt L)g\Big\|_{\dot{F}^{s,L}_{p,q}(dw)}
	&\lesi \|f\|_{H^{p_3}_L(dw)}\|g\|_{\dot{F}^{s,L}_{p_4,q}(dw)}.
\end{aligned}
\]
This completes our proof.
\end{proof}

{\bf Acknowledgements.} T. A. Bui was supported by the research grant ARC DP260101083 from the Australian Research Council, X. Han was supported by the research grants   2025KY61 and 2025AHGXZK40203 from Hefei Institute of Technology,
 and S. Mukherjee was supported by Institute Postdoctoral Fellowship
from IIT Bombay.

\end{document}